\algnewcommand\algorithmicparameter{\textbf{Input:}}
\algnewcommand\Parameter{\item[\algorithmicparameter]}
\algnewcommand\algorithmicoutput{\textbf{Output:}}
\algnewcommand\Output{\item[\algorithmicoutput]}
\theoremstyle{plain}
\newtheorem{theorem}{Theorem}[section]
\newtheorem{lemma}[theorem]{Lemma}
\newtheorem{corollary}[theorem]{Corollary}
\newtheorem{assumption}[theorem]{Assumption}
\theoremstyle{definition}
\theoremstyle{remark}
\newtheorem{remark}{Remark}
\NewDocumentCommand{\E}{ O{} m }{\mathbb{E} \left[ \ifthenelse{\equal{#1}{}}{}{\left.} #2
  \ifthenelse{\equal{#1}{}}{}{\right|#1} \right]}
\newcommand{\F}{\mathcal{F}}
\renewcommand{\O}{\mathcal{O}}
\newcommand{\R}{\mathbb{R}}
\newcommand{\prox}[2]{\mathrm{prox}_{#1}\left(#2\right)}
\newcommand{\dist}[1]{\mathrm{dist}\left(#1\right)}
\newcommand{\sdist}[1]{\mathrm{dist}^2\left(#1\right)}
\DeclareMathOperator*{\argmin}{arg\,min}
\newcommand{\ip}[1]{\left\langle #1 \right\rangle}
\newcommand\numberthis{\addtocounter{equation}{1}\tag{\theequation}}
\begin{document}


\title{Accelerated Dual-Averaging Primal-Dual Method for Composite Convex Minimization\footnote{Dedicated to Professor Ya-xiang Yuan on the occasion of his 60th birthday.}}

\author{
\name{Conghui Tan\textsuperscript{a}, Yuqiu Qian\textsuperscript{b},
 Shiqian Ma\textsuperscript{c} \thanks{Corresponding Author. Email: sqma@ucdavis.edu} and Tong Zhang\textsuperscript{d}}
\affil{\textsuperscript{a}WeBank AI Group, Shenzhen, China; \textsuperscript{b}Tencent Inc., Shenzhen, China;
\\ \textsuperscript{c}Department of Mathematics, University of California, Davis; \textsuperscript{d}Departments of Computer Science and Mathematics, The Hong Kong University of Science and Technology}
}

\maketitle

\begin{abstract}
Dual averaging-type methods are widely used in industrial machine learning applications due to their ability to promoting solution structure (e.g., sparsity) efficiently. In this paper, we propose a novel accelerated dual-averaging primal-dual algorithm for minimizing a composite convex function.
We also derive a stochastic version of the proposed method which solves empirical risk minimization, and its advantages on handling sparse data are demonstrated both theoretically and empirically.
\end{abstract}

\begin{keywords} Dual Averaging Algorithm; Primal-dual; Empirical Risk Minimization; Acceleration; Sparse Data \end{keywords}

\section{Introduction}
\label{sec:intro}

In this paper, we consider minimizing the following composite convex function:
\begin{equation}
\label{eq:problem}
\min_{x\in\R^d}\left\{ P(x)\coloneqq f(Ax) + g(x)\right\},
\end{equation}
where $A\in\R^{n\times d}$, and both $f:\R^n\rightarrow \R\cup\{+\infty\}$ and $g: \R^d\rightarrow \R\cup\{+\infty\}$ are convex closed functions. Here $f$ can be either smooth or non-smooth, and we assume $g$ has easy proximal mapping. Problem \eqref{eq:problem} covers a wide range of applications. For example, choosing $f$ to be the indicator function of a convex set $C=\{z\in\R^n|z\leq b\}$ corresponds to minimizing a convex function over a polyhedron. It covers the Lasso problem \cite{Tibshirani-LASSO-1996}
\begin{equation}
\label{eq:lasso}
\min_{x\in\R^d}\left\{ \frac{1}{2n}\|Ax-b\|^2_2 + \lambda\|x\|_1\right\},
\end{equation}
by setting $f(u)=\frac{1}{2n}\|u-b\|^2_2$ and $g(x)=\lambda\|x\|_1$. Another application of the form \eqref{eq:problem} is the support vector machine (SVM):
\begin{equation}
\label{eq:svm}
\min_{x\in\R^d}\frac{1}{n}\sum_{i=1}^n \max\left\{1-\ip{b_i a_i, x},\, 0\right\} + \frac{\lambda}{2}\|x\|^2_2,
\end{equation}
where $a_i\in\R^d$ is the feature vector of the $i$-th data sample, and $b_i\in\{\pm 1\}$ is the corresponding label. 

For smooth $f$, a classical way to solve \eqref{eq:problem} is the proximal gradient method (PGM) and its accelerations \cite{beck2009fast,tseng2008accelerated}. PGM for solving \eqref{eq:problem} iterates as
\[
x^{t+1}=\prox{\eta g}{x^t - \eta A^\top\nabla f(Ax^t)},
\]
where $\eta>0$ is the step size.
{Dual averaging (DA, \cite{nesterov2009primal}) algorithm} is another widely used algorithm for solving \eqref{eq:problem}, which iterates as
\[
x^{t+1}=\prox{\sum_{k=0}^t\beta_t g}{x^0 - \sum_{k=0}^t \beta_k A^\top\nabla f(Ax^k)},
\]
where $\{\beta_t\}$ are the step sizes. Different from PGM, in each iteration, DA always starts at the initial iterate $x^0$, averages all the past gradients, and then conducts proximal mapping. Dual-averaging type methods are widely used in many industrial machine learning applications due to the following advantages over PGM \cite{mcmahan2013ad, gao2017learning, mcmahan2017survey}. First, it is observed that DA is better in promoting solution structure (e.g., sparsity) than PGM \cite{xiao2010dual, mcmahan2011follow}. Second, DA can deal with sparse data much more efficiently than PGM. We will provide more details in Section \ref{sec:sparse}.

{In this paper, we develop a new dual-averaging primal-dual (DAPD) method for solving \eqref{eq:problem}, which has accelerated optimal convergence rate. When $f(Ax)$ has a finite-sum structure, we develop a stochastic version of DAPD, named SDAPD, which is also optimal, and has better overall complexity on sparse data comparing with existing algorithms of the same type.} 


{\bf Notation.} The following notation is adopted throughout this paper. For the matrix $A\in\R^{n\times d}$ used in \eqref{eq:problem}, we use $a_i^\top$ to denote the $i$-th row of $A$ and $a_{ij}$ to denote the $j$-th coordinate of $a_i$ ($1\leq i\leq n$, $1\leq j\leq d$). We define 
\begin{equation}\label{eq:norm_A}
R\coloneqq \|A\|_2
\quad\text{and}\quad
\bar{R}\coloneqq \max_{i=1,\dots,n} \|a_i\|_2.
\end{equation}
Note that $\|z\|_2$ denotes the spectral norm if $z$ is a matrix, and $\ell_2$ norm if $z$ is a vector. It is easy to show that $R$ and $\bar{R}$ have the following relationship: $\bar{R} \leq R \leq \sqrt{n}\bar{R}$. We use $\rho$ to denote the proportion of non-zero entries in $A$ (note $0<\rho\leq 1$). To ease the later discussion on computational complexity, without loss of generality, we assume $\rho\geq 1/n$ and $\rho\geq 1/d$, which happens for large-scale problems. For a convex set $C$, $\dist{x,C}\coloneqq \inf_{x'\in C}\|x - x'\|_2$ is the distance between point $x$ and set $C$. For any function $h(u):\R^p\rightarrow \R$, its proximal mapping is defined as:
\[
  \prox{h}{u}\coloneqq\argmin_{v\in\R^p}\left\{h(v) + \frac{1}{2}\|v - u\|^2_2\right\}, \quad \forall u\in\R^p.
\]
The domain of function $h(u)$ is denoted as $\mathrm{dom}\, h\coloneqq\{u\in\R^p| h(u)<+\infty\}$ and its conjugate function is defined as $h^*(v) = \sup_{u\in\R^p}\left\{\ip{v,u} - h(u)\right\}$. $\partial h(u)$ denotes the subdifferential of $h$ at $u$. The function $h(u)$ is said to be $\mu$-strongly convex if
\[
h(v)\geq h(u) + \ip{s,v-u} + \frac{\mu}{2}\|v-u\|^2_2,\quad\forall s\in\partial h(u),\ u,v\in\R^p.
\]
$h(u)$ is called $L$-Lipschitz continuous if it satisfies
\[
|h(u) - h(v)|\leq L\|u - v\|_2,\quad\forall u,v\in\R^p.
\]
$h(u)$ is called $\zeta$-smooth  if it is differentiable and its gradient is $\zeta$-Lipschitz continuous, i.e.,
\[
\vspace{-0.2cm}
\|\nabla h(u) - \nabla h(v)\|_2\leq \zeta\|u - v\|_2
,\quad\forall u,v\in\R^p.
\]

\section{The Dual-Averaging Primal-Dual Algorithm}\label{sec:algorithm}

In this section, we present our dual-averaging primal-dual (DAPD) algorithm, which solves the following primal-dual formulation of problem \eqref{eq:problem}:
\begin{equation}\label{eq:primal_dual}
\min_{x\in\R^d}\max_{y\in\R^n} \left\{F(x,y)\coloneqq  g(x) + \ip{y,Ax} - f^*(y)\right\}.
\end{equation}
We use $(x^*,y^*)$ to denote a pair of optimal primal-dual solutions to \eqref{eq:primal_dual}, and $X^*$ and $Y^*$ the sets containing all optimal primal and dual solutions, respectively.

\begin{algorithm}[t]
    \caption{Dual-Averaging Primal-Dual (DAPD) Method}\label{alg:1}
    \begin{algorithmic}[1]
        \Parameter initial points $x^0$ and $y^0$, primal and dual step sizes $\{\beta_t\}$, $\{\eta_t\}$ and $\{\tau_t\}$
        \State Initialize $B_0=\beta_0$
        \For{$t=0,1,2,\dots$}
            \State Compute intermediate variable:
                \begin{equation}
                    \label{eq:alg1_x_int}
                    \bar{x}^{t+1} \coloneqq \prox{\eta_t g}{x^t - \eta_t A^\top y^t}
                \end{equation}
            \State Update dual variable:
                \begin{equation}
                    \label{eq:alg1_y}
                    y^{t+1} \coloneqq \prox{\tau_t f^*}{y^t + \tau_t A\bar{x}^{t+1}}
                \end{equation}
            \State Update primal variable via a dual-averaged step:
                \begin{equation}
                \label{eq:alg1_x}
                x^{t+1} \coloneqq \prox{B_t g}{x^0 - \sum_{k=0}^t\beta_k A^\top y^{k+1}}
                \end{equation}
            \State Update $B_{t+1}\coloneqq B_t+\beta_{t+1}$
        \EndFor
    \end{algorithmic}
\end{algorithm}

The details of DAPD algorithm are presented in Algorithm \ref{alg:1}. In each iteration, DAPD first conducts one primal proximal gradient step
to compute the intermediate variable $\bar{x}^{t+1}$, and then $y^{t+1}$ is computed using the gradient evaluated at $\bar{x}^{t+1}$. Finally, $x^{t+1}$ is updated in \eqref{eq:alg1_x}, which adopts a dual-averaging type update rule. Note that all the past dual intermediate variables $\{y^{k+1}\}_{k=0}^t$ play a role here, and the gradient used in \eqref{eq:alg1_x} is a weighted sum of them, instead of simply $y^{t+1}$. 
The update of $y^{t+1}$ in \eqref{eq:alg1_y} can be viewed as an extragradient step \cite{korpelevich1976extragradient, korpelevichextrapolation}, since the gradient used here is evaluated at the intermediate variable $\bar{x}^{t+1}$ instead of $x^t$. 
Moreover, \eqref{eq:alg1_y} and \eqref{eq:alg1_x} have the flavor of the primal-dual hybrid gradient \cite{chambolle2011first}. 
Note that the step size used in the proximal mapping in \eqref{eq:alg1_x} is $B_t$, which is much larger than $\beta_t$. 
This helps promote the desired structures of solution $x^t$. For instance, if $g$ is the $\ell_1$ norm, then $x^{t+1}$ generated by \eqref{eq:alg1_x} is more likely to be sparse because $B_t$ is large. 

When implementing DAPD, the summation in \eqref{eq:alg1_x} needs to be computed incrementally. By doing so, the main computation cost in each iteration of Algorithm \ref{alg:1} lies in the matrix-vector multiplications $A^\top y^t$, $A\bar{x}^{t+1}$ and $A^\top y^{t+1}$. Since $A$ is a $n$-by-$d$ matrix with sparsity $\rho$, these multiplications can be done in $\O(\rho nd)$ operations. 

We now analyze the convergence rate of DAPD (Algorithm \ref{alg:1}). The following assumption is made throughout this section.
\begin{assumption}
$f$ is $(1/\gamma)$-smooth ($\gamma\geq 0$), and $g(x)$ is $\mu$-strongly convex ($\mu\geq 0$).
\end{assumption}
Note that $\gamma=0$ means that $f$ is non-smooth, and $\mu=0$ means that $g$ is non-strongly convex.

{
Although some parts of our DAPD algorithm look very similar to the primal-dual hybrid gradient (PDHG, \citep{chambolle2011first}), technical challenges still exist if we want to directly adapt the analysis of PDHG to our algorithm.
\begin{enumerate}[label=(\roman*)]
\item $\bar{x}^{t+1}$ in DAPD is obtained by a gradient step instead of extrapolation step. In the analysis of PDHG, the extrapolation step plays an important role in canceling the mismatch between primal and dual variables. Here we need a new approach to tackle this difficulty.
\item Since the primal updates consist of two gradient descent steps, two very different sequences of primal step sizes $\{\eta_t\}$ and $\{\beta_t\}$ and the dual step size $\{\tau_t\}$ need to be specified. This requires us to carefully balance these three parameters so that we can obtain the fastest convergence.
\item The update of $x^{t+1}$ in DAPD is in the dual averaging style, which is very different from PDHG in that it involves all the past gradients rather than simply the gradient at $y^{t+1}$. This makes it difficult to relate this step to the objective function value $F(x^{t+1},y^{t+1})$.
\end{enumerate}
In order to tackle these issues, new techniques are needed for the analysis. We define a potential function $\phi_t$ to characterize the dual-averaging step as follows:
}
\begin{equation}\label{eq:def_phi}
    \phi_{t}(x)\coloneqq \frac{1}{2}\left\|x-x^0\right\|^2_2 + \sum_{k=0}^{t-1} \beta_k\left(g(x) + \langle y^{k+1}, Ax\rangle \right).
\end{equation}
From \eqref{eq:alg1_x} it is easy to observe that $x^{t+1} \coloneqq\argmin_x \phi_{t+1}(x)$. Besides, since $g(x)$ is $\mu$-strongly convex,
$\phi_t(x)$ is strongly convex with strong convexity parameter $1+\sum_{k=0}^{t-1}\beta_k\mu = 1+B_{t-1}\mu$. Moreover, we denote $\phi_t^*\coloneqq\min_{x\in\R^d} \phi_t(x)$.


The following lemma characterizes the change of $\phi_t^*$ after one iteration.
\begin{lemma}\label{lemma:x}
Assume
\begin{equation}\label{eq:condition_beta}
    \eta_t(1+B_{t-1}\mu)\geq\beta_t.
\end{equation}
We have
\begin{align*}
\phi_{t+1}^* - \phi_t^* \geq \beta_t\left(g(\bar{x}^{t+1}) + \langle y^{t+1}, A\bar{x}^{t+1}\rangle \right)
    - \frac{\beta_t R^2\eta_t}{2}\|y^{t+1} - y^t\|^2_2.
\numberthis \label{eq:change_phi}
\end{align*}
\end{lemma}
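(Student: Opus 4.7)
The plan is to lower-bound $\phi_{t+1}^* - \phi_t^*$ by combining the minimality of $x^{t+1}$ for $\phi_{t+1}$ with the optimality conditions of the intermediate prox step \eqref{eq:alg1_x_int}. First I would use the telescoping identity $\phi_{t+1}(x) = \phi_t(x) + \beta_t\bigl(g(x) + \langle y^{t+1}, Ax\rangle\bigr)$ that follows directly from \eqref{eq:def_phi}, together with the $(1+B_{t-1}\mu)$-strong convexity of $\phi_t$ at its minimizer $x^t$, to obtain
\[
\phi_{t+1}^* - \phi_t^* \geq \frac{1+B_{t-1}\mu}{2}\|x^{t+1} - x^t\|^2_2 + \beta_t\bigl(g(x^{t+1}) + \langle y^{t+1}, Ax^{t+1}\rangle\bigr).
\]
This bounds the difference in terms of $x^{t+1}$ rather than the target $\bar{x}^{t+1}$, so the core technical work is to trade $x^{t+1}$ for $\bar{x}^{t+1}$.

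To effect this swap inside the $\beta_t$ term I would use two ingredients: the $\mu$-strong convexity of $g$ at $\bar{x}^{t+1}$ evaluated with the subgradient $s = (x^t - \bar{x}^{t+1})/\eta_t - A^\top y^t \in \partial g(\bar{x}^{t+1})$ supplied by the optimality of \eqref{eq:alg1_x_int}, and the trivial split $\langle y^{t+1}, Ax^{t+1}\rangle = \langle y^{t+1}, A\bar{x}^{t+1}\rangle + \langle A^\top y^{t+1}, x^{t+1}-\bar{x}^{t+1}\rangle$. Setting $u\coloneqq x^{t+1} - x^t$, $v\coloneqq x^{t+1} - \bar{x}^{t+1}$ and using $\langle x^t - \bar{x}^{t+1}, v\rangle = \|v\|^2_2 - \langle u, v\rangle$, the bound becomes $\phi_{t+1}^* - \phi_t^* \geq \beta_t\bigl(g(\bar{x}^{t+1}) + \langle y^{t+1}, A\bar{x}^{t+1}\rangle\bigr) + \mathcal{R}$, where the residual $\mathcal{R}$ is the quadratic form
\[
\mathcal{R} = \frac{1+B_{t-1}\mu}{2}\|u\|^2_2 + \frac{\beta_t}{\eta_t}\|v\|^2_2 - \frac{\beta_t}{\eta_t}\langle u, v\rangle + \beta_t\langle A^\top(y^{t+1}-y^t), v\rangle + \frac{\beta_t\mu}{2}\|v\|^2_2.
\]
It remains to show $\mathcal{R} \geq -\frac{\beta_t R^2 \eta_t}{2}\|y^{t+1}-y^t\|^2_2$.

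The concluding step is two applications of Young's inequality. Using $\|A^\top w\|_2 \leq R\|w\|_2$ gives $\beta_t \langle A^\top(y^{t+1}-y^t), v\rangle \geq -\frac{\beta_t R^2\eta_t}{2}\|y^{t+1}-y^t\|^2_2 - \frac{\beta_t}{2\eta_t}\|v\|^2_2$, which is exactly where the error term appearing in \eqref{eq:change_phi} comes from. The symmetric bound $-\frac{\beta_t}{\eta_t}\langle u, v\rangle \geq -\frac{\beta_t}{2\eta_t}\|u\|^2_2 - \frac{\beta_t}{2\eta_t}\|v\|^2_2$ handles the other cross term. The three $\|v\|^2_2$ contributions then cancel exactly, and the surviving quadratic terms $\bigl(\frac{1+B_{t-1}\mu}{2} - \frac{\beta_t}{2\eta_t}\bigr)\|u\|^2_2 + \frac{\beta_t\mu}{2}\|v\|^2_2$ are non-negative precisely by assumption \eqref{eq:condition_beta}.

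The main obstacle I anticipate is calibrating the Young's inequalities so that both cancellations happen simultaneously: the $\|v\|^2_2$ penalties emanating from the two cross terms must add up to exactly $\frac{\beta_t}{\eta_t}\|v\|^2_2$, neutralizing the bonus supplied by the prox optimality, while the residual $\|u\|^2_2$ coefficient must be tight against \eqref{eq:condition_beta}. In effect, the hypothesis $\eta_t(1+B_{t-1}\mu) \geq \beta_t$ is dictated by this balance; asymmetric Young's scalings would only yield a stricter step-size condition. This cancellation is the central design idea behind both \eqref{eq:condition_beta} and the proof.
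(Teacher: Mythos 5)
Your proposal is correct and follows essentially the same route as the paper: both start from the strong convexity of $\phi_t$ at its minimizer plus the telescoping identity $\phi_{t+1}=\phi_t+\beta_t(g+\langle y^{t+1},A\cdot\rangle)$, extract a subgradient $s\in\partial g(\bar{x}^{t+1})$ from the prox optimality of \eqref{eq:alg1_x_int}, and generate the error term via Young's inequality together with $\|A^\top w\|_2\le R\|w\|_2$, with \eqref{eq:condition_beta} closing the argument. The only differences are bookkeeping: the paper uses the exact three-point identity and drops $\|x^t-\bar{x}^{t+1}\|_2^2\ge 0$ at the end, which is precisely your Young's bound on $\langle u,v\rangle$, and it uses plain convexity of $g$ where you invoke $\mu$-strong convexity and then discard the extra nonnegative term.
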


\begin{proof}
From the strong convexity of $\phi_{t+1}(x)$ and \eqref{eq:condition_beta}, we obtain
\begin{align*}
    \phi_{t+1}^* =  & \phi_{t+1}(x^{t+1}) = \phi_t(x^{t+1}) + \beta_t\left(g(x^{t+1}) + \langle y^{t+1}, Ax^{t+1}\rangle \right) \\
               \geq & \phi_t^* + \frac{1+B_{t-1}\mu}{2}\|x^t - x^{t+1}\|^2_2 + \beta_t\left(g(x^{t+1}) + \langle y^{t+1}, Ax^{t+1}\rangle \right) \\
               \geq & \phi_t^* + \frac{\beta_t}{2\eta_t}\|x^t - x^{t+1}\|^2_2 + \beta_t\left(g(x^{t+1}) + \langle y^{t+1}, Ax^{t+1}\rangle \right).
\numberthis \label{eq:x_phi_star}
\end{align*}
Note that \eqref{eq:alg1_x_int} can be rewritten as $\bar{x}^{t+1} = x^t - \eta_t\left(A^\top y^t + s \right),\ \exists s\in\partial g(\bar{x}^{t+1})$,
which yields
\begin{align*}
&\|x^t - x^{t+1}\|^2_2 - \|x^t - \bar{x}^{t+1}\|^2_2 - \|\bar{x}^{t+1} - x^{t+1}\|^2_2 \\
=& 2\langle x^t - \bar{x}^{t+1}, \bar{x}^{t+1} - x^{t+1} \rangle = 2\eta_t\langle A^\top y^t + s, \bar{x}^{t+1} - x^{t+1} \rangle \\
\geq& 2\eta_t\left(\langle y^t, A(\bar{x}^{t+1} - x^{t+1}) \rangle +  g(\bar{x}^{t+1}) - g(x^{t+1})\right),
\numberthis \label{eq:derive_x_int}
\end{align*}
where the inequality is due to the convexity of $g(x)$. Combining \eqref{eq:x_phi_star} and \eqref{eq:derive_x_int} yields
\begin{align*}
& \phi_{t+1}^* \\
\geq& \phi_t^* + \frac{\beta_t}{2\eta_t}\left[\|x^t - \bar{x}^{t+1}\|^2_2 + \|\bar{x}^{t+1} - x^{t+1}\|^2_2
        + 2\eta_t\left(\langle y^t, A(\bar{x}^{t+1} - x^{t+1}) \rangle
        + g(\bar{x}^{t+1}) - g(x^{t+1})\right) \right] \\
    &+ \beta_t\left(g(x^{t+1}) + \langle y^{t+1}, Ax^{t+1}\rangle \right) \\
=& \phi_t^* + \frac{\beta_t}{2\eta_t}\left(\|x^t - \bar{x}^{t+1}\|^2_2 + \|\bar{x}^{t+1} - x^{t+1}\|^2_2\right)
    + \beta_t\left(g(\bar{x}^{t+1}) + \langle y^{t+1}, A\bar{x}^{t+1}\rangle \right) \\
&   + \beta_t\langle y^t - y^{t+1}, A(\bar{x}^{t+1} - x^{t+1}) \rangle \\
\geq& \phi_t^* + \frac{\beta_t}{2\eta_t}\left(\|x^t - \bar{x}^{t+1}\|^2_2 + \|\bar{x}^{t+1} - x^{t+1}\|^2_2\right)
    + \beta_t\left(g(\bar{x}^{t+1}) + \langle y^{t+1}, A\bar{x}^{t+1}\rangle \right) \\
&   - \beta_t\left(\frac{R^2\eta_t}{2}\|y^{t+1} - y^t\|^2_2
    + \frac{1}{2R^2\eta_t}\|A(\bar{x}^{t+1} - x^{t+1})\|^2_2 \right) \\
\geq& \phi_t^* + \frac{\beta_t}{2\eta_t}\left(\|x^t - \bar{x}^{t+1}\|^2_2 + \|\bar{x}^{t+1} - x^{t+1}\|^2_2\right)
    + \beta_t\left(g(\bar{x}^{t+1}) + \langle y^{t+1}, A\bar{x}^{t+1}\rangle \right) \\
&   - \beta_t\left(\frac{R^2\eta_t}{2}\|y^{t+1} - y^t\|^2_2
    + \frac{1}{2\eta_t}\|\bar{x}^{t+1} - x^{t+1}\|^2_2 \right) \\
\geq& \phi_t^*  - \frac{\beta_tR^2\eta_t}{2}\|y^{t+1} - y^t\|^2_2
+ \beta_t\left(g(\bar{x}^{t+1}) + \langle y^{t+1}, A\bar{x}^{t+1}\rangle \right),
\end{align*}
where the second inequality is due to Young's inequality and the third inequality is from \eqref{eq:norm_A}. This completes the proof.
\end{proof}

The next lemma concerns the update of the dual variable.
\begin{lemma}\label{lemma:y}
For any $y\in\R^n$, it holds that
\begin{align*}
&\frac{1}{2\tau_t}\left(\|y^t - y\|^2_2 - (1+\gamma\tau_t)\|y^{t+1} - y\|^2_2 - \|y^{t+1} - y^t\|^2_2\right) \\
\geq& \langle A\bar{x}^{t+1}, y - y^{t+1}\rangle + f^*(y^{t+1}) - f^*(y).
\numberthis\label{eq:lemma_y}
\end{align*}
\end{lemma}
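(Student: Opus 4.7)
The plan is to prove this as a standard three-point prox inequality, exploiting the key duality fact that $(1/\gamma)$-smoothness of $f$ makes $f^*$ be $\gamma$-strongly convex. This is precisely the source of the $(1+\gamma\tau_t)$ factor on the right-hand side. Structurally the lemma is of the ``descent-plus-cosine-rule'' flavor that one routinely derives for proximal steps, so no new technique should be needed; only careful rearrangement.

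First I would read off the optimality condition for the dual update \eqref{eq:alg1_y}. By definition of the proximal map, $y^{t+1}$ is the minimizer of $\tau_t f^*(y) + \tfrac{1}{2}\|y - y^t - \tau_t A\bar{x}^{t+1}\|_2^2$, which gives the inclusion
\begin{equation*}
\frac{1}{\tau_t}(y^t - y^{t+1}) + A\bar{x}^{t+1} \;\in\; \partial f^*(y^{t+1}).
\end{equation*}
Call this subgradient $s$. Next I would invoke the $\gamma$-strong convexity of $f^*$ at the anchor point $y^{t+1}$ against an arbitrary test point $y$:
\begin{equation*}
f^*(y) \;\geq\; f^*(y^{t+1}) + \langle s,\, y - y^{t+1}\rangle + \frac{\gamma}{2}\|y - y^{t+1}\|_2^2 .
\end{equation*}
Substituting the explicit form of $s$ splits the inner product into the term $\langle A\bar{x}^{t+1},\, y - y^{t+1}\rangle$ that we want on the right of \eqref{eq:lemma_y}, plus $\tfrac{1}{\tau_t}\langle y^t - y^{t+1},\, y - y^{t+1}\rangle$.

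Then I would apply the polarization identity $2\langle a, b\rangle = \|a\|_2^2 + \|b\|_2^2 - \|a-b\|_2^2$ with $a = y^t - y^{t+1}$ and $b = y - y^{t+1}$ to rewrite that remaining inner product as $\tfrac{1}{2\tau_t}\bigl(\|y^t - y^{t+1}\|_2^2 + \|y - y^{t+1}\|_2^2 - \|y^t - y\|_2^2\bigr)$. Collecting the two squared distances to $y^{t+1}$, namely $\tfrac{1}{2\tau_t}\|y-y^{t+1}\|_2^2$ from polarization and $\tfrac{\gamma}{2}\|y-y^{t+1}\|_2^2$ from strong convexity, fuses them into $\tfrac{1+\gamma\tau_t}{2\tau_t}\|y - y^{t+1}\|_2^2$, which is the distinctive coefficient in the statement. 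A final transposition of terms (moving all $\|\cdot\|_2^2$ pieces to the left) yields \eqref{eq:lemma_y} exactly.

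Because each step is forced, there is no real obstacle; the only minor pitfall is being careful about signs when rearranging and making sure the $\gamma$-strong convexity of $f^*$ is invoked (rather than convexity of $f^*$), since otherwise the $(1+\gamma\tau_t)$ coefficient would be lost. Note that the proof does not require $\gamma>0$: when $\gamma=0$ the argument degenerates to the usual convex prox inequality and the statement still holds, consistent with the assumption $\gamma\geq 0$ made in the paper.
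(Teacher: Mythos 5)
Your proof is correct and is essentially the same as the paper's: both extract the subgradient $s = \tfrac{1}{\tau_t}(y^t - y^{t+1}) + A\bar{x}^{t+1} \in \partial f^*(y^{t+1})$ from the prox optimality condition, invoke the $\gamma$-strong convexity of $f^*$ (inherited from the $(1/\gamma)$-smoothness of $f$), and combine it with the three-point identity $\|y^t - y\|_2^2 - \|y^{t+1} - y\|_2^2 - \|y^t - y^{t+1}\|_2^2 = 2\langle y^t - y^{t+1}, y^{t+1} - y\rangle$ before dividing by $2\tau_t$. The only difference is cosmetic (you start from the strong-convexity inequality and polarize afterwards, whereas the paper starts from the norm identity), and your closing observation that the argument degenerates gracefully when $\gamma = 0$ is consistent with the paper's standing assumption $\gamma \geq 0$.
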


\begin{proof}
Using \eqref{eq:alg1_y} and following similar derivation as \eqref{eq:derive_x_int}, it is easy to show that there exists $s\in\partial f^*(y^{t+1})$ such that the following holds:
\begin{align*}
&\|y^t - y\|^2_2 - \|y^{t+1} - y\|^2_2 - \|y^t - y^{t+1}\|^2_2 \\
=&2\langle y^t-y^{t+1}, y^{t+1}-y \rangle = 2\langle \tau_t(-A\bar{x}^{t+1} + s), y^{t+1}-y \rangle \\ 
\geq& 2\tau_t\left(\langle A\bar{x}^{t+1}, y - y^{t+1}\rangle + f^*(y^{t+1}) - f^*(y) + \frac{\gamma}{2}\|y^{t+1} -y\|^2_2 \right),
\numberthis \label{eq:lemma_y_1}
\end{align*}
where the inequality is due to the $\gamma$-strong convexity of $f^*(y)$, which is implied by the $(1/\gamma)$-smoothness of $f$ \cite{kakade2009duality}. Dividing \eqref{eq:lemma_y_1} by $2\tau_t$ yields \eqref{eq:lemma_y}.
\end{proof}

We are now ready to present the main convergence results of DAPD.
\begin{theorem}
\label{thm:alg1}
Consider the first $T$ iterations of DAPD. Assume the parameters satisfy \eqref{eq:condition_beta} and the following conditions:
\begin{gather}
\eta_t\tau_t\leq \frac{1}{R^2}, \label{eq:c2}\\
\frac{\beta_{t+1}}{\tau_{t+1}}\leq \frac{\beta_t}{\tau_t}(1+\gamma\tau_t). \label{eq:c3}
\end{gather}
Define
\begin{equation}
\hat{x}^T=\frac{1}{B_{t-1}}\sum_{t=0}^{T-1}\beta_t\bar{x}^{t+1}
\quad\text{and}\quad
\hat{y}^T=\frac{1}{B_{t-1}}\sum_{t=0}^{T-1}\beta_t y^{t+1}.
\end{equation}
The following inequality holds for any $x\in\R^d$ and $y\in\R^n$:
\begin{equation}\label{eq:thm_alg1_convergence}
F(\hat{x}^T,y) - F(x,\hat{y}^T) \leq \frac{1}{B_{T-1}}\left(\frac{\beta_0}{2\tau_0}\|y^0 - y\|^2_2 + \frac{1}{2}\|x^0-x\|^2_2\right).
\end{equation}
\end{theorem}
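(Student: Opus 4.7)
The plan is to combine Lemmas~\ref{lemma:x} and~\ref{lemma:y} in a weighted, telescoping sum and then use convexity of $g$ and $f^*$ together with Jensen's inequality on the averaged iterates $\hat{x}^T,\hat{y}^T$ to land on $F(\hat x^T,y)-F(x,\hat y^T)$.

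Concretely, I would first sum the inequality~\eqref{eq:change_phi} of Lemma~\ref{lemma:x} for $t=0,\dots,T-1$. The left-hand side telescopes to $\phi_T^*-\phi_0^*=\phi_T^*$, since $\phi_0^*=0$ by definition. I would then upper bound $\phi_T^*$ by $\phi_T(x)=\tfrac12\|x-x^0\|_2^2+\sum_{t=0}^{T-1}\beta_t\bigl(g(x)+\langle y^{t+1},Ax\rangle\bigr)$ for an arbitrary $x\in\mathbb{R}^d$, which is the key maneuver that injects the variable $x$ (and the $\tfrac12\|x-x^0\|_2^2$ term eventually appearing in \eqref{eq:thm_alg1_convergence}) into the chain.

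Next, I would multiply inequality~\eqref{eq:lemma_y} of Lemma~\ref{lemma:y} by $\beta_t$ and sum over $t=0,\dots,T-1$. Condition~\eqref{eq:c3}, $\beta_{t+1}/\tau_{t+1}\le (\beta_t/\tau_t)(1+\gamma\tau_t)$, is precisely what is needed for the $\|y^t-y\|_2^2$ and $\|y^{t+1}-y\|_2^2$ terms to telescope favorably, leaving only a positive boundary contribution $\tfrac{\beta_0}{2\tau_0}\|y^0-y\|_2^2$ (the negative $\|y^T-y\|_2^2$ term can be dropped). At that point I add the two accumulated inequalities. The primal weighted sum carries a term $-\tfrac{\beta_t R^2\eta_t}{2}\|y^{t+1}-y^t\|_2^2$, while the dual side contributes $-\tfrac{\beta_t}{2\tau_t}\|y^{t+1}-y^t\|_2^2$; condition~\eqref{eq:c2}, $\eta_t\tau_t\le 1/R^2$, makes their sum nonpositive and allows both quadratic remainders to be discarded.

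After the dust settles, the cross terms assemble neatly: the $\langle y^{t+1},A\bar{x}^{t+1}\rangle$ from Lemma~\ref{lemma:x} cancels against the $-\langle A\bar{x}^{t+1},y^{t+1}\rangle$ inside Lemma~\ref{lemma:y}, leaving $\langle y,A\bar{x}^{t+1}\rangle$ on the primal iterate side and $-\langle y^{t+1},Ax\rangle$ on the $x$ side. What remains is
\[
\tfrac{1}{2}\|x-x^0\|_2^2+\tfrac{\beta_0}{2\tau_0}\|y^0-y\|_2^2
\;\ge\;\sum_{t=0}^{T-1}\beta_t\Bigl(g(\bar x^{t+1})+\langle y,A\bar x^{t+1}\rangle-f^*(y)-g(x)-\langle y^{t+1},Ax\rangle+f^*(y^{t+1})\Bigr).
\]
Finally I would apply Jensen's inequality, using convexity of $g$ and $f^*$ and linearity of the inner product, to replace $\sum_t\beta_t g(\bar x^{t+1})$, $\sum_t\beta_t f^*(y^{t+1})$, $\sum_t\beta_t\langle y,A\bar x^{t+1}\rangle$, and $\sum_t\beta_t\langle y^{t+1},Ax\rangle$ by $B_{T-1}$ times their counterparts evaluated at $\hat x^T$ and $\hat y^T$. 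Recognizing the result as $B_{T-1}\bigl(F(\hat x^T,y)-F(x,\hat y^T)\bigr)$ and dividing by $B_{T-1}$ yields~\eqref{eq:thm_alg1_convergence}.

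The main obstacle I expect is bookkeeping rather than a deep idea: orchestrating the three step-size conditions~\eqref{eq:condition_beta}, \eqref{eq:c2}, \eqref{eq:c3} so that each plays its role exactly once, and carefully shifting indices in the telescoping of the dual distance terms so that the strong-convexity bonus $(1+\gamma\tau_t)$ lines up with the next summand $\beta_{t+1}/\tau_{t+1}$. Getting the signs on the $\|y^{t+1}-y^t\|_2^2$ terms to combine nonpositively via~\eqref{eq:c2}, while simultaneously identifying the remaining cross terms as the primal-dual gap $F(\hat x^T,y)-F(x,\hat y^T)$, is where all the hypotheses of the theorem come together.
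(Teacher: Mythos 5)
Your proposal is correct and follows essentially the same route as the paper's proof: combine Lemmas \ref{lemma:x} and \ref{lemma:y} with weights $\beta_t$, use \eqref{eq:c2} to dispose of the $\|y^{t+1}-y^t\|_2^2$ terms and \eqref{eq:c3} to telescope the dual distance terms, bound $\phi_T^*$ by $\phi_T(x)$ with $\phi_0^*=0$, and finish with Jensen; the only difference is that you sum each lemma separately before adding, whereas the paper adds them per iteration and then telescopes, which is an immaterial reordering. One small bookkeeping note: the $-\frac{\beta_t}{2\tau_t}\|y^{t+1}-y^t\|_2^2$ from Lemma \ref{lemma:y} sits on the left-hand side, so after rearrangement it enters the lower bound with a plus sign and \emph{dominates} the $-\frac{\beta_t R^2\eta_t}{2}\|y^{t+1}-y^t\|_2^2$ term via \eqref{eq:c2} (their combination is nonnegative and can be dropped from the lower bound), rather than two negative terms summing to something nonpositive.
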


\begin{proof}
Multiplying \eqref{eq:lemma_y} by $\beta_t$, and adding the resulted inequality to \eqref{eq:change_phi}, we obtain
\begin{align*}
&\phi_{t+1}^*  - \phi_t^* + \frac{\beta_t}{2\tau_t}\left(\|y^t - y\|^2_2 - (1+\gamma\tau_t)\|y^{t+1} - y\|^2_2\right) \\
\geq& \beta_t\left(g(\bar{x}^{t+1}) + \langle y^{t+1}, A\bar{x}^{t+1}\rangle \right) - \frac{\beta_t R^2\eta_t}{2}\|y^{t+1} - y^t\|^2_2 \\
&   + \beta_t\left(\langle y-y^{t+1},A\bar{x}^{t+1} \rangle + f^*(y^{t+1}) - f^*(y) \right)+ \frac{\beta_t}{2\tau_t}\|y^{t+1} - y^t\|^2_2 \\
\geq& \beta_t\left(\langle y,A\bar{x}^{t+1} \rangle + g(\bar{x}^{t+1}) + f^*(y^{t+1}) - f^*(y)\right), \numberthis \label{eq:c4}
\end{align*}
where the last inequality is due to \eqref{eq:c2}. Combining \eqref{eq:c3} and \eqref{eq:c4} yields
\begin{align*}
&\left(\frac{\beta_t}{2\tau_t}\|y^t - y\|^2_2 - \phi_t^*\right)- \left(\frac{\beta_{t+1}}{2\tau_{t+1}}\|y^{t+1} - y\|^2_2 - \phi_{t+1}^*\right) \\
\geq& \beta_t\left(\langle y,A\bar{x}^{t+1} \rangle + g(\bar{x}^{t+1}) + f^*(y^{t+1}) - f^*(y)\right). \numberthis \label{eq:c5}
\end{align*}
Note that the left hand side of \eqref{eq:c5} has a telescoping structure. Summing \eqref{eq:c5} over $t=0,\ldots,T-1$ yields
\begin{align*}
&\sum_{t=0}^{T-1}\beta_t\left(\langle y,A\bar{x}^{t+1} \rangle + g(\bar{x}^{t+1}) + f^*(y^{t+1}) - f^*(y)\right) \\
\leq& \left(\frac{\beta_0}{2\tau_0}\|y^0 - y\|^2_2 - \phi_0^*\right) - \left(\frac{\beta_T}{2\tau_T}\|y^T - y\|^2_2 - \phi_T^*\right) \leq \frac{\beta_0}{2\tau_0}\|y^0 - y\|^2_2 - \phi_0^* + \phi_T^*.
\numberthis\label{eq:with_phi}
\end{align*}
From \eqref{eq:def_phi}, it is straightforward that $\phi_0^* = \min_{x\in\R^d} \frac{1}{2}\|x - x^0\|^2_2 = 0$ and
\[
\phi_T^* \leq \phi_T(x)
=\frac{1}{2}\|x-x^0\|^2_2 + \sum_{t=0}^{T-1}\beta_t\left(g(x) + \langle y^{t+1}, Ax \rangle \right).
\]
Combining these facts with \eqref{eq:with_phi} and using the convexity-concavity of $F(x,y)$, we have
\begin{align*}
&\frac{\beta_0}{2\tau_0}\|y^0 - y\|^2_2 + \frac{1}{2}\|x^0-x\|^2_2 \\
\geq&\sum_{t=0}^{T-1}\beta_t\left(\langle y,A\bar{x}^{t+1} \rangle - f^*(y) + g(\bar{x}^{t+1})
    -\langle y^{t+1},Ax \rangle + f^*(y^{t+1}) - g(x)\right) \\
=& \sum_{t=0}^{T-1}\beta_t\left(F(\bar{x}^{t+1},y) - F(x,y^{t+1})\right) \geq \left(\sum_{t=0}^{T-1}\beta_t\right)\cdot\left(F(\hat{x}^T,y) - F(x,\hat{y}^T)\right) \\
=& B_{T-1}\left(F(\hat{x}^T,y) - F(x,\hat{y}^T)\right),
\end{align*}
which completes the proof.
\end{proof}


From Theorem \ref{thm:alg1}, we can derive some more interpretable complexity bounds by choosing some specific parameters.
\begin{corollary}
\label{corollary:1}
The following facts hold for DAPD (Algorithm \ref{alg:1}).
\begin{enumerate}[label=(\roman*)]
\item If $\gamma>0$ and $\mu>0$, by choosing
    \begin{equation}
    \label{eq:case_1_par}
    \eta_t = \frac{1}{R}\sqrt{\frac{\gamma}{\mu}},\;\;
    \tau_t = \frac{1}{R}\sqrt{\frac{\mu}{\gamma}}
    \;\;\text{and}\;\;
    \beta_t = \frac{1}{R}\sqrt{\frac{\gamma}{\mu}}\left(1+\frac{\sqrt{\mu\gamma}}{R}\right)^t,
    \end{equation}
    DAPD converges linearly:
    \begin{align*}
    \|\hat{x}^T - x^*\|^2_2 \leq \frac{1}{\left(1+\frac{\sqrt{\mu\gamma}}{R}\right)^T - 1}\left[\|x^0 - x^*\|^2_2 + \frac{\gamma}{\mu}\|y^0 - y^*\|_2^2\right]. \numberthis \label{eq:case_1_conclude}
    \end{align*}
\item If $\gamma>0$, $\mu=0$ and $f$ is $L$-Lipschitz continuous, by choosing
    \[
        \eta_t=\beta_t=\frac{\gamma(t+1)}{3R^2}
        \;\;\text{and}\;\;
        \tau_t=\frac{3}{\gamma(t+1)},
    \]
    DAPD converges sublinearly in terms of primal sub-optimality:
    \begin{equation}\label{eq:case_2_conclude}
    P(\hat{x}^T) - P(x^*) \leq \frac{9R^2\sdist{x^0,X^*} + 4\gamma^2 L^2}{3\gamma T(T+1)}.
    \end{equation}
\item If $\mu>0$ and $\gamma=0$, by choosing
    \[
        \eta_t = \frac{4}{\mu(t+1)},\;\;
        \tau_t = \frac{\mu(t+1)}{4R^2}
        \;\;\text{and}\;\;
        \beta_t = \frac{2(t+1)}{\mu},
    \]
    DAPD converges sublinearly:
    \[
        \|\hat{x}^T - x^*\|^2_2 \leq \frac{\mu\|x^0 - x^*\|^2_2 + 8R^2\sdist{y^0,Y^*}}{\mu T(T+1)}.
    \]
\item If $\gamma=0$, $\mu=0$ and $f$ is $L$-Lipschitz continuous, by setting
    \[
        \tau_t \equiv \tau
        \;\;\text{and}\;\;
        \eta_t=\beta_t \equiv \frac{1}{\tau R^2},
    \]
    where $\tau>0$ is an arbitrary constant, we have
    \[
        P(\hat{x}^T) - P(x^*) \leq \frac{\tau R^2\cdot\sdist{x^0, X^*} + \frac{4L^2}{\tau}}{2T}.
    \]
\end{enumerate}
\end{corollary}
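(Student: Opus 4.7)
The plan is to derive each of the four bounds by verifying that the specified step sizes satisfy conditions \eqref{eq:condition_beta}, \eqref{eq:c2}, and \eqref{eq:c3} of Theorem \ref{thm:alg1}, plugging the resulting explicit expressions for $B_{T-1}$ and $\beta_0/\tau_0$ into \eqref{eq:thm_alg1_convergence}, and finally converting the primal-dual gap into the quantity on the right-hand side of each stated bound. For cases (i) and (iii), where $\mu > 0$, I would choose $x = x^*$ and $y = y^*$ (with $y^*$ taken as the projection of $y^0$ onto $Y^*$ in case (iii)), and then use the saddle-point inequality together with the $\mu$-strong convexity of $g$ to obtain $F(\hat{x}^T, y^*) - F(x^*, \hat{y}^T) \geq (\mu/2)\|\hat{x}^T - x^*\|_2^2$. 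For cases (ii) and (iv), the $L$-Lipschitz continuity of $f$ implies $\mathrm{dom}\, f^* \subseteq \{y : \|y\|_2 \leq L\}$, so I would choose $y$ in \eqref{eq:thm_alg1_convergence} to be a maximizer of $\langle y, A\hat{x}^T\rangle - f^*(y)$ (so that $F(\hat{x}^T, y) = P(\hat{x}^T)$); combined with $F(x^*, \hat{y}^T) \leq P(x^*)$, this gives $P(\hat{x}^T) - P(x^*) \leq F(\hat{x}^T, y) - F(x^*, \hat{y}^T)$, and the triangle inequality bounds $\|y^0 - y\|_2^2 \leq 4L^2$, while taking $x^*$ to be the projection of $x^0$ onto $X^*$ converts $\|x^0 - x^*\|_2^2$ to $\sdist{x^0, X^*}$.

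The verification of the three step-size conditions is mostly algebraic. Condition \eqref{eq:c2}, $\eta_t\tau_t \leq 1/R^2$, holds with equality in every case. Condition \eqref{eq:c3}: in case (i), direct substitution shows $\beta_t/\tau_t = R^{-2}(1+\sqrt{\mu\gamma}/R)^t$ satisfies it with equality; in case (ii) it reduces to the elementary $(t+2)^2 \leq (t+1)(t+4)$; and in cases (iii) and (iv), $\gamma = 0$ combined with a constant ratio $\beta_t/\tau_t$ makes it trivial. The only delicate item is \eqref{eq:condition_beta}, $\eta_t(1 + B_{t-1}\mu) \geq \beta_t$. In case (i), summing the geometric series yields the key identity $1 + B_{t-1}\mu = (1+\sqrt{\mu\gamma}/R)^t$, which makes \eqref{eq:condition_beta} hold with equality and simultaneously produces the contraction factor. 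In case (iii), $B_{t-1} = t(t+1)/\mu$ reduces it to $4(t^2+t+1) \geq 2(t+1)^2$, which is immediate. In cases (ii) and (iv), with $\mu = 0$, it collapses to $\eta_t \geq \beta_t$, satisfied by the choice $\eta_t = \beta_t$.

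The hardest part will be the bookkeeping in case (i): one must track that the telescoping identity for $1+B_{t-1}\mu$ yields exactly the denominator $(1+\sqrt{\mu\gamma}/R)^T - 1$ and that $\beta_0/\tau_0 = \gamma/\mu$ gives the claimed prefactor on $\|y^0 - y^*\|_2^2$; in particular one uses $(\gamma/2)\|\hat{y}^T - y^*\|_2^2 \geq 0$ from strong concavity in $y$ to absorb terms before dividing by $\mu/2$. The other three cases follow the same template with purely arithmetic modifications, and I expect no further conceptual obstacle beyond aligning constants.
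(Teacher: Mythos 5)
Your proposal is correct and follows essentially the same route as the paper's proof: verify \eqref{eq:condition_beta}, \eqref{eq:c2}, \eqref{eq:c3}, apply Theorem \ref{thm:alg1}, then use the saddle-point/strong-convexity argument for cases (i) and (iii) and the bounded dual domain $\|y\|_2\leq L$ for cases (ii) and (iv) (the paper only writes out (i) and (ii), declaring the rest analogous). One trivial slip: in case (i) you write $\beta_t/\tau_t=R^{-2}(1+\sqrt{\mu\gamma}/R)^t$ when in fact $\beta_t/\tau_t=(\gamma/\mu)(1+\sqrt{\mu\gamma}/R)^t$, but this constant cancels in \eqref{eq:c3} and you use the correct value $\beta_0/\tau_0=\gamma/\mu$ for the final prefactor, so nothing is affected.
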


\begin{proof}
For the sake of succinctness, we only prove the first two cases, while the other two cases can be proved similarly.

\emph{Case (i): $\gamma>0$ and $\mu>0$.} 
It is easy to verify that the parameter setting in \eqref{eq:case_1_par} satisfies \eqref{eq:condition_beta}, \eqref{eq:c2} and \eqref{eq:c3}. Thus, Theorem \ref{thm:alg1} applies here. Choosing $(x,y)=(x^*,y^*)$ in \eqref{eq:thm_alg1_convergence} gives
\begin{equation}\label{cor-proof-eq-1}
F(\hat{x}^T,y^*) - F(x^*,\hat{y}^T)
\leq \frac{1}{B_{T-1}}\left(\frac{\beta_0}{2\tau_0}\|y^0 - y^*\|^2_2 + \frac{1}{2}\|x^0-x^*\|^2_2\right).
\end{equation}
The $\mu$-strong convexity of $F(\cdot,y^*)$ implies
\begin{align*}
F(\hat{x}^T,y^*) - F(x^*,\hat{y}^T) \geq F(\hat{x}^T,y^*) - F(x^*,y^*) \geq \frac{\mu}{2}\|\hat{x}^T - x^*\|^2_2. \numberthis \label{cor-proof-eq-2}
\end{align*}
Combining \eqref{cor-proof-eq-1}, \eqref{cor-proof-eq-2} and \eqref{eq:case_1_par} yields \eqref{eq:case_1_conclude}.

\emph{Case (ii): $\gamma>0$, $\mu=0$ and $f$ is Lipschitz continuous.} It is again easy to verify that the conditions in Theorem \ref{thm:alg1} are satisfied and thus Theorem \ref{thm:alg1} applies here. In \eqref{eq:thm_alg1_convergence}, we set $x=x^*$ and take supremum with respect to $y$ in the domain of $f^*$, which gives
\begin{align*}
\frac{1}{B_{T-1}}\left(\frac{\beta_0}{2\tau_0}\sup_{y\in\mathrm{dom}\, f^*}\|y^0 - y\|^2_2 + \frac{1}{2}\|x^0-x^*\|^2_2\right)
\geq& \sup_{y\in\mathrm{dom}\, f^* }F(\hat{x}^T,y) - F(x^*,\hat{y}^T) \\
\geq& P(\hat{x}^T) - P(x^*). \numberthis \label{cor-proof-case-2}
\end{align*}
Because $f$ is $L$-Lipschitz continuous, the domain of $f^*$ is bounded such that $\|y\|_2\leq L$ for all $y\in\mathrm{dom}\, f^*$ \cite{tan2018stochastic}. Hence, \eqref{cor-proof-case-2} implies
\begin{equation}\label{cor-proof-case-2-eq-2}
\frac{1}{B_{T-1}}\left(\frac{2\beta_0}{\tau_0}L^2 + \frac{1}{2}\|x^0-x^*\|^2_2\right)\geq P(\hat{x}^T) - P(x^*).
\end{equation}
Since \eqref{cor-proof-case-2-eq-2} holds for any $x^*\in X^*$, by replacing $\|x^0 - x^*\|^2_2$ by $\sdist{x^0, X^*}$ in \eqref{cor-proof-case-2-eq-2} we obtain the desired result \eqref{eq:case_2_conclude}. 
\end{proof}

\begin{remark}
For problem \eqref{eq:problem}, if $f$ is $(1/\gamma)$-smooth and $g$ is $\mu$-strongly convex, the condition number of problem \eqref{eq:problem} is $\kappa\coloneqq \frac{R^2}{\mu\gamma}$. The case (i) in Corollary \ref{corollary:1} implies that DAPD requires $\O\left(\sqrt{\kappa}\log\frac{1}{\epsilon}\right)$ iterations to achieve $\epsilon$ accuracy, which is an accelerated rate and matches the complexity lower bound of first-order methods.

On the other hand, when the objective function of \eqref{eq:problem} is smooth but non-strongly convex (case (ii)), or is non-smooth but strongly convex (case (iii)), Corollary \ref{corollary:1} implies that DAPD has $\O\left(\frac{1}{T^2}\right)$ accelerated convergence rate, which is also optimal for first-order methods. For non-smooth and non-strongly convex problems (case (iv)), the convergence rate of DAPD is $\O\left(\frac{1}{T}\right)$, which is faster than subgradient method and the original dual averaging method, whose rates are $\O(1/\sqrt{T})$ under the same assumptions. 

The assumption that $f$ is Lipschitz continuous required in cases (ii) and (iv) of Corollary \ref{corollary:1} is standard for primal-dual methods.
\end{remark}

{
\begin{remark}
Though our theoretical analysis is based on the averaged iterates $(\hat{x}^T, \hat{y}^T)$, in the actual implementation of our algorithms, we will always choose the last iterate $(x^T,y^T)$ as the output to make sure the solution structure (e.g., sparsity) will be preserved. Such strategy is also the common practice of dual-averaging-type methods \cite{xiao2010dual}.
\end{remark}
}


\section{The Stochastic DAPD Method}
\label{sec:stochastic}

In this section, we focus on \eqref{eq:problem} where $f$ has a finite-sum structure. More specifically, we assume that the primal problem is of the following form:
\begin{equation} \label{eq:stoc_primal}
\min_{x\in\R^d} \left\{\tilde{P}(x) \coloneqq \frac{1}{n}\sum_{i=1}^n f_i(a_i^\top x) + g(x)\right\},
\end{equation}
with $f_i:\R\rightarrow \R$. Problem \eqref{eq:stoc_primal} reduces to \eqref{eq:problem} by choosing $f(u)=\frac{1}{n}\sum_{i=1}^n f_i(u)$. The primal-dual formulation of \eqref{eq:stoc_primal} is:
\begin{equation*}
    \min_{x\in\R^d}\max_{y\in\R^n} \left\{\tilde{F}(x,y) \coloneqq \frac{1}{n}\ip{y,Ax} + g(x) - \frac{1}{n}\sum_{i=1}^n f_i^*(y_i)\right\}.
\end{equation*}

Since \eqref{eq:stoc_primal} is a special case of \eqref{eq:problem}, DAPD can be directly applied here. If we assume each $f_i$ is $(1/\gamma)$-smooth and $g$ is $\mu$-strongly convex, the complexity of DAPD for solving \eqref{eq:stoc_primal} is
$\O\left(\sqrt{\kappa'}\log\frac{1}{\epsilon}\right)$,
and $\kappa'\coloneqq \frac{R^2}{n\mu\gamma}$ denotes the condition number.
In this section, we show that by utilizing the finite-sum structure of $f$ in problem \eqref{eq:stoc_primal}, we can design a stochastic version of DAPD, which has a better complexity.

\begin{algorithm}[t]
    \caption{Stochastic Dual-Averaging Primal-Dual (SDAPD) Method}\label{alg:2}
    \begin{algorithmic}[1]
        \Parameter initial values $x^0$ and $y^0$, primal step sizes $\{\beta_t\}$ and $\eta$, dual step size $\tau$
        \State Initialize $\bar{x}^0=x^0$ and $B_0=\beta_0$
        \For{$t=0,1,\dots$}
            \State Uniformly randomly sample $i_t\in \{1,2,\ldots,n\}$
            \State Compute intermediate variable:
                \begin{equation}
                    \label{eq:alg2_x_int}
                    \bar{x}^{t+1} = \prox{\eta g}{x^t - \frac{\eta}{n} A^\top y^t}
                \end{equation}
            \State Update dual variable:
                \begin{equation}
                \label{eq:alg2_y}
                    y^{t+1}_i = \left\{ \begin{array}{ll}
                        \tilde{y}^{t+1}_i\coloneqq\prox{\tau f_i^*}{y^t_i + \tau \langle a_i,\bar{x}^{t+1}\rangle},\quad&\text{if }i=i_t \\
                        y^t_i, \quad&\text{if }i\neq i_t
                    \end{array}\right.
                \end{equation}
            \State Set
                \begin{equation}
                    \label{eq:alg2_y_bar}
                    \bar{y}^{t+1}=y^t+n(y^{t+1} - y^t)
                \end{equation}
            \State Update primal variable:
                \begin{equation}
                    \label{eq:alg2_x}
                    x^{t+1} = \prox{B_t g}{x^0 - s^{t+1}}, \mbox{ with } s^{t+1}\coloneqq\sum_{k=0}^t\frac{\beta_k}{n} A^\top \bar{y}^{k+1}
                \end{equation}
            \State Let $B_{t+1}\coloneqq B_t + \beta_{t+1}$
        \EndFor
    \end{algorithmic}
\end{algorithm}

Our stochastic method SDAPD, which is inspired by the stochastic primal-dual coordinate (SPDC) method \cite{zhang2017stochastic}, is presented in Algorithm \ref{alg:2}. 
In each iteration of SDAPD, only one coordinate of the dual variable $y_{i_t}$ is updated, with $i_t$ sampled uniformly random from $\{1,2,\ldots,n\}$.
Correspondingly, only one row vector $a_{i_t}^\top$ is involved in the update of the dual variable. Besides, another variable $\bar{y}^{t+1}$ is obtained by extrapolation. 
Moreover, note that in Algorithm \ref{alg:2} we only consider fixed primal and dual step sizes $\eta$ and $\tau$.

When implementing SDAPD, one should keep an auxiliary variable
\begin{equation}\label{eq:def_ut}
u^t\coloneqq \frac{1}{n}A^\top y^t.
\end{equation}
Since each time only one coordinate of $y$ is changed, $u_t$ can be updated incrementally as:
\begin{equation}\label{eq:update_ut}
u^{t+1}= u^t + \frac{1}{n}(y^{t+1}_{i_t} - y^t_{i_t})a_{i_t}.
\end{equation}
As a result, the matrix-vector multiplication in \eqref{eq:alg2_x} can be efficiently computed by:
\begin{align*}
\frac{1}{n}A^\top\bar{y}^{t+1} = \frac{1}{n}A^\top y^{t+1} + (y^{t+1}_{i_t} - y^t_{i_t})a_{i_t}=u^{t+1} + (y^{t+1}_{i_t} - y^t_{i_t})a_{i_t}.
\end{align*}
Therefore, the summation of gradients $s^{t+1}$ in \eqref{eq:alg2_x}
can also be incrementally updated with $\O(d)$ computation cost. As a result, the per-iteration complexity of SDAPD is $\O(d)$, much cheaper than
the per-iteration complexity $\O(nd)$ of DAPD.

\begin{remark}
We need to point out that Murata and Suzuki also developed an accelerated stochastic dual averaging method \cite{murata2017doubly} 
which is based on stochastic variance-reduction techniques \cite{johnson2013accelerating} and requires the assumption that $f_i$ is smooth.
\end{remark}


We now provide the convergence analysis of SDAPD. Here we make the following assumption.
\begin{assumption}\label{assumption:stoc}
All $f_i$'s are $(1/\gamma)$-smooth ($\gamma>0$), and $g(x)$ is $\mu$-strongly convex ($\mu>0$).
\end{assumption}

For the ease of presentation, we denote $f^*(y)\coloneqq \frac{1}{n}\sum_{i=1}^n f_i^*(y_i)$ throughout this section. Besides, we use $\F_t$ to stand for the $\sigma$-field generated by all random variables up to iteration $t$. Clearly, when conditioned on $\F_t$, $x^t$ and $y^t$ are known.
Similar to the analysis of DAPD, we define a potential function as follows:
\begin{equation}\label{def-phi'}
\tilde{\phi}_t(x)\coloneqq \frac{1}{2}\left\|x-x^0\right\|^2_2
    + \sum_{k=0}^{t-1} \beta_k\left(g(x) + \frac{1}{n}\langle \bar{y}^{k+1}, Ax\rangle \right).
\end{equation}
Again, it is easy to see that $x^{t+1}$ is the minimizer of $\tilde{\phi}_{t+1}(x)$. Since the updates of $\bar{x}^{t+1}$ and $x^{t+1}$ in SDAPD are almost identical to DAPD, we have the following lemma that is similar to Lemma \ref{lemma:x}.
\begin{lemma}\label{lemma:stoc_x}
Assume $\eta(1+B_{t-1}\mu)\geq\beta_t$. We have
\begin{align*}
\E[\F_t]{\tilde{\phi}^*_{t+1} - \tilde{\phi}^*_t}
\geq \beta_t\E[\F_t]{g(\bar{x}^{t+1}) + \frac{1}{n}\langle \bar{y}^{t+1}, A\bar{x}^{t+1}\rangle }
    - \frac{\bar{R}^2\beta_t\eta}{2}\E[\F_t]{\|y^{t+1} - y^t\|^2_2}.
\numberthis\label{eq:lemma_x_2}
\end{align*}
\end{lemma}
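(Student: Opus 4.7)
The plan is to mirror the proof of Lemma \ref{lemma:x} essentially line-by-line, with two modifications to handle the stochastic setting: we reason pointwise for a fixed draw of $i_t$ and take expectation at the very end, and we exploit the fact that $y^{t+1}-y^t$ has only the single nonzero coordinate $i_t$ to replace $R$ by $\bar R$ at the Young's-inequality step.

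First, I would start from $x^{t+1}=\argmin_x \tilde\phi_{t+1}(x)$ and apply $(1+B_{t-1}\mu)$-strong convexity of $\tilde\phi_t$ around its minimizer $x^t$, then use the hypothesis $\eta(1+B_{t-1}\mu)\geq \beta_t$ and the definition \eqref{def-phi'} to obtain
\[
\tilde\phi^*_{t+1}\geq \tilde\phi^*_t + \frac{\beta_t}{2\eta}\|x^{t+1}-x^t\|_2^2+\beta_t\Bigl(g(x^{t+1})+\tfrac{1}{n}\langle \bar y^{t+1},Ax^{t+1}\rangle\Bigr),
\]
exactly as in \eqref{eq:x_phi_star}. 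Next, I would rewrite \eqref{eq:alg2_x_int} as $\bar x^{t+1}=x^t-\eta(\tfrac{1}{n}A^\top y^t+s)$ for some $s\in\partial g(\bar x^{t+1})$ and repeat the parallelogram identity from \eqref{eq:derive_x_int} to get
\[
\|x^t-x^{t+1}\|_2^2\geq\|x^t-\bar x^{t+1}\|_2^2+\|\bar x^{t+1}-x^{t+1}\|_2^2+2\eta\Bigl(\tfrac{1}{n}\langle y^t,A(\bar x^{t+1}-x^{t+1})\rangle+g(\bar x^{t+1})-g(x^{t+1})\Bigr).
\]

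Substituting this into the previous display and regrouping the inner-product terms through the identity
\[
\tfrac{1}{n}\langle y^t,A(\bar x^{t+1}-x^{t+1})\rangle+\tfrac{1}{n}\langle \bar y^{t+1},Ax^{t+1}\rangle=\tfrac{1}{n}\langle \bar y^{t+1},A\bar x^{t+1}\rangle-\tfrac{1}{n}\langle \bar y^{t+1}-y^t,A(\bar x^{t+1}-x^{t+1})\rangle,
\]
I arrive at the analogue of the intermediate bound in Lemma \ref{lemma:x} with a cross term involving $\bar y^{t+1}-y^t$. Here is where the stochastic structure enters: because only coordinate $i_t$ of $y$ changes, $\bar y^{t+1}-y^t=n(y^{t+1}_{i_t}-y^t_{i_t})e_{i_t}$, so $A^\top(\bar y^{t+1}-y^t)=n(y^{t+1}_{i_t}-y^t_{i_t})a_{i_t}$ and $\|A^\top(\bar y^{t+1}-y^t)\|_2^2\leq n^2\bar R^2\|y^{t+1}-y^t\|_2^2$.

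Applying Young's inequality to the cross term,
\[
-\tfrac{\beta_t}{n}\langle A^\top(\bar y^{t+1}-y^t),\bar x^{t+1}-x^{t+1}\rangle \geq -\tfrac{\beta_t\eta}{2n^2}\|A^\top(\bar y^{t+1}-y^t)\|_2^2-\tfrac{\beta_t}{2\eta}\|\bar x^{t+1}-x^{t+1}\|_2^2,
\]
the second term cancels the $\tfrac{\beta_t}{2\eta}\|\bar x^{t+1}-x^{t+1}\|_2^2$ contribution and the first becomes $-\tfrac{\beta_t\eta\bar R^2}{2}\|y^{t+1}-y^t\|_2^2$. Dropping the remaining nonnegative term $\tfrac{\beta_t}{2\eta}\|x^t-\bar x^{t+1}\|_2^2$ and taking $\E[\F_t]{\cdot}$ on both sides yields \eqref{eq:lemma_x_2}. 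The only nonroutine step is the cross-term identity above; once it is in place, the rest is essentially a transcription of the original proof with $R$ replaced by $\bar R$ thanks to the single-coordinate update of $y$.
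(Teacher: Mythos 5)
Your proposal is correct and follows essentially the same route as the paper's proof: the same strong-convexity bound on $\tilde\phi_{t+1}^*$, the same parallelogram identity for the proximal step, the same Young's-inequality split of the cross term $\tfrac{\beta_t}{n}\langle y^t-\bar y^{t+1},A(\bar x^{t+1}-x^{t+1})\rangle$ with weight $\eta/n$, and the same single-coordinate observation giving $\|A^\top(\bar y^{t+1}-y^t)\|_2^2\leq \bar R^2\|\bar y^{t+1}-y^t\|_2^2=n^2\bar R^2\|y^{t+1}-y^t\|_2^2$ before taking conditional expectation. The only difference is cosmetic: you convert $\|\bar y^{t+1}-y^t\|$ to $n\|y^{t+1}-y^t\|$ immediately, whereas the paper carries $\|y^t-\bar y^{t+1}\|_2^2/n^2$ to the final line.
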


\begin{proof}
The proof is largely the same as Lemma \ref{lemma:x}.
Following the same argument as in Lemma \ref{lemma:x}, it is easy to show that \eqref{eq:x_phi_star} becomes
\begin{align*}
 \tilde{\phi}_{t+1}^*
\geq \tilde{\phi}_t^* + \frac{\beta_t}{2\eta_t}\|x^t - x^{t+1}\|^2_2
    + \beta_t\left(g(x^{t+1}) + \frac{1}{n}\langle \bar{y}^{t+1}, Ax^{t+1}\rangle \right).
\numberthis \label{eq:x_phi_star-new}
\end{align*}
and \eqref{eq:derive_x_int} becomes
\begin{align*}
&\|x^t - x^{t+1}\|^2_2 - \|x^t - \bar{x}^{t+1}\|^2_2 - \|\bar{x}^{t+1} - x^{t+1}\|^2_2 \\
\geq & 2\eta_t\left(\frac{1}{n}\langle y^t, A(\bar{x}^{t+1} - x^{t+1}) \rangle
    +  g(\bar{x}^{t+1}) - g(x^{t+1})\right).
\numberthis \label{eq:derive_x_int-new}
\end{align*}
Combining \eqref{eq:x_phi_star-new} and \eqref{eq:derive_x_int-new} yields
\begin{align*}
& \tilde{\phi}^*_{t+1} \\
\geq& \tilde{\phi}^*_t + \frac{\beta_t}{2\eta_t}\left(\|x^t - \bar{x}^{t+1}\|^2_2 + \|\bar{x}^{t+1} - x^{t+1}\|^2_2\right)
    + \beta_t\left(g(\bar{x}^{t+1}) + \frac{1}{n}\langle \bar{y}^{t+1}, A\bar{x}^{t+1}\rangle \right) \\
&   + \frac{\beta_t}{n}\langle y^t - \bar{y}^{t+1}, A(\bar{x}^{t+1} - x^{t+1}) \rangle \\
\geq& \tilde{\phi}^*_t + \frac{\beta_t}{2\eta_t}\left(\|x^t - \bar{x}^{t+1}\|^2_2 + \|\bar{x}^{t+1} - x^{t+1}\|^2_2\right)
    + \beta_t\left(g(\bar{x}^{t+1}) + \frac{1}{n}\langle \bar{y}^{t+1}, A\bar{x}^{t+1}\rangle \right) \\
&   - \frac{\beta_t}{n}\left( \frac{\eta_t}{2n}\|A^\top(y^t - \bar{y}^{t+1})\|_2^2 + \frac{n}{2\eta_t}\|\bar{x}^{t+1} - x^{t+1}\|_2^2) \right).
\numberthis\label{eq:alg2_x_1}
\end{align*}
where the last inequality is due to Young's inequality.
By noting that $y^t$ and $\bar{y}^{t+1}$ only differ in coordinate $i_t$, we have
\begin{align*}
\|A^\top(y^t - \bar{y}^{t+1})\|_2^2 = \|(y^t_{i_t} - \bar{y}^{t+1}_{i_t})a_{i_t}\|_2^2
\leq (y^t_{i_t} - \bar{y}^{t+1}_{i_t})^2\bar{R}^2
= \bar{R}^2\|y^t -\bar{y}^{t+1}\|^2_2,
\end{align*}
which combining with \eqref{eq:alg2_x_1} yields
\begin{align*}
& \tilde{\phi}^*_{t+1} \\
\geq& \tilde{\phi}^*_t + \frac{\beta_t}{2\eta_t}\left(\|x^t - \bar{x}^{t+1}\|^2_2 + \|\bar{x}^{t+1} - x^{t+1}\|^2_2\right)
    + \beta_t\left(g(\bar{x}^{t+1}) + \frac{1}{n}\langle \bar{y}^{t+1}, A\bar{x}^{t+1}\rangle \right) \\
&   - \frac{\beta_t}{n}\left( \frac{\bar{R}^2\eta_t}{2n}\|y^t - \bar{y}^{t+1}\|_2^2 + \frac{n}{2\eta_t}\|\bar{x}^{t+1} - x^{t+1}\|_2^2) \right) \\
\geq& \tilde{\phi}^*_t
    +\beta_t\left(g(\bar{x}^{t+1}) + \frac{1}{n}\langle \bar{y}^{t+1}, A\bar{x}^{t+1}\rangle \right)
   - \frac{\bar{R}^2\eta_t\beta_t}{2n^2}\|y^t - \bar{y}^{t+1}\|_2^2. \numberthis \label{eq:lemma-3.2-1}
\end{align*}
Using \eqref{eq:alg2_y_bar} and taking conditional expectation to \eqref{eq:lemma-3.2-1} yields the desired result \eqref{eq:lemma_x_2}.
\end{proof}


Similarly, we have the following lemma that is analogous to Lemma \ref{lemma:y}. We omit the proof for succinctness.
\begin{lemma}
For each $i\in\{1,2,\dots,n\}$, it holds that
\begin{align*}
&\frac{1}{2\tau}\left[(y^t_i - y_i)^2 - \left(1+\gamma\tau\right)(\tilde{y}^{t+1}_i - y_i)^2
    - (\tilde{y}^{t+1}_i - y^t_i)^2\right] \\
\geq& \langle (y_i - \tilde{y}^{t+1}_i)a_i,\bar{x}^{t+1}\rangle + f^*_i(\tilde{y}^{t+1}_i) - f^*_i(y_i), \qquad \forall y_i\in\R.
\numberthis\label{eq:y_single}
\end{align*}
\end{lemma}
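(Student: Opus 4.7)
The plan is to specialize the argument of Lemma~\ref{lemma:y} to a single scalar coordinate of the dual variable. First, I would invoke the optimality condition of the proximal mapping in \eqref{eq:alg2_y}: since $\tilde{y}^{t+1}_i = \prox{\tau f_i^*}{y^t_i + \tau\langle a_i, \bar{x}^{t+1}\rangle}$, there must exist some $s_i \in \partial f_i^*(\tilde{y}^{t+1}_i)$ such that
\[
\tilde{y}^{t+1}_i - y^t_i = \tau\bigl(\langle a_i,\bar{x}^{t+1}\rangle - s_i\bigr).
\]

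Next, I would apply the scalar polarization identity
\[
(y^t_i - y_i)^2 - (\tilde{y}^{t+1}_i - y_i)^2 - (\tilde{y}^{t+1}_i - y^t_i)^2 = 2(y^t_i - \tilde{y}^{t+1}_i)(\tilde{y}^{t+1}_i - y_i),
\]
and substitute the optimality condition to rewrite the right-hand side as $2\tau(s_i - \langle a_i,\bar{x}^{t+1}\rangle)(\tilde{y}^{t+1}_i - y_i)$. Since $f_i$ is $(1/\gamma)$-smooth, the conjugate $f_i^*$ is $\gamma$-strongly convex by \cite{kakade2009duality}, so the subgradient inequality gives
\[
s_i(\tilde{y}^{t+1}_i - y_i) \geq f_i^*(\tilde{y}^{t+1}_i) - f_i^*(y_i) + \frac{\gamma}{2}(\tilde{y}^{t+1}_i - y_i)^2.
\]

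Finally, I would plug this bound into the polarization identity, divide both sides by $2\tau$, and move the $\frac{\gamma}{2}(\tilde{y}^{t+1}_i - y_i)^2$ term to the left-hand side, which is exactly what produces the coefficient $(1+\gamma\tau)$ in front of $(\tilde{y}^{t+1}_i - y_i)^2$ and yields \eqref{eq:y_single}. I do not expect any genuine obstacle: the derivation is the coordinate-wise counterpart of Lemma~\ref{lemma:y}, with $A$ replaced by the single row $a_i^\top$ and vector inner products replaced by scalar multiplications, which is presumably why the authors elected to omit the proof for succinctness.
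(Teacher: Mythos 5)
Your proposal is correct and is exactly the argument the paper intends: the paper omits this proof, stating only that it is analogous to Lemma~\ref{lemma:y}, and your coordinate-wise specialization (proximal optimality condition, scalar polarization identity, and $\gamma$-strong convexity of $f_i^*$ inherited from the $(1/\gamma)$-smoothness of $f_i$) reproduces that derivation with $A$ replaced by the row $a_i^\top$. All steps check out, including the bookkeeping that moves $\frac{\gamma}{2}(\tilde{y}^{t+1}_i - y_i)^2$ to the left to produce the $(1+\gamma\tau)$ coefficient.
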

Moreover, we have the following lemma.
\begin{lemma}
When conditioning on $\F_t$, for any $y\in\R^n$, it holds that
\begin{align*}
&\frac{1}{2\tau}\E[\F_t]{\left(1+\frac{(n-1)\gamma\tau}{n}\right)\|y^t - y\|_2^2
    - \left(1+\gamma\tau\right)\|y^{t+1} - y\|_2^2
    - \|y^{t+1} - y^t\|_2^2} \\
\geq & \E[\F_t]{-\frac{1}{n}\langle \bar{y}^{t+1} - y, A\bar{x}^{t+1} \rangle
    + n f^*(y^{t+1}) - (n-1)f^*(y^t) - f^*(y)}.
\numberthis\label{eq:lemma_y_2}
\end{align*}
\end{lemma}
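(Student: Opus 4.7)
The plan is to derive this lemma by averaging the single-coordinate inequality \eqref{eq:y_single} over $i \in \{1,\dots,n\}$ and re-expressing the resulting sums as conditional expectations, exploiting the uniform random sampling of $i_t$. For each fixed $i$ the inequality \eqref{eq:y_single} holds with $\tilde{y}^{t+1}_i \coloneqq \prox{\tau f_i^*}{y^t_i + \tau\langle a_i,\bar{x}^{t+1}\rangle}$ viewed as defined for \emph{every} coordinate (not only $i = i_t$). Summing \eqref{eq:y_single} over $i$ and dividing by $n$ produces an unconditional inequality in the $\tilde{y}^{t+1}_i$'s, which I will then translate into an inequality involving $y^{t+1}$, $\bar{y}^{t+1}$ and $f^*(\cdot)$.

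The key probabilistic identities that power the translation are
\begin{gather*}
\E[\F_t]{\|y^{t+1} - y\|_2^2} = \tfrac{n-1}{n}\|y^t - y\|_2^2 + \tfrac{1}{n}\sum_{i=1}^n (\tilde{y}^{t+1}_i - y_i)^2,\\
\E[\F_t]{\|y^{t+1} - y^t\|_2^2} = \tfrac{1}{n}\sum_{i=1}^n (\tilde{y}^{t+1}_i - y^t_i)^2,\\
\E[\F_t]{n f^*(y^{t+1}) - (n-1) f^*(y^t)} = \tfrac{1}{n}\sum_{i=1}^n f_i^*(\tilde{y}^{t+1}_i),
\end{gather*}
each of which follows directly from the fact that $y^{t+1}$ agrees with $y^t$ except at coordinate $i_t$, together with $\Pr[i_t = i \mid \F_t] = 1/n$. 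Using the first two identities inside the averaged left-hand side, the coefficient of $\|y^t - y\|_2^2$ collects to $\frac{1}{n} + (1+\gamma\tau)\frac{n-1}{n} = 1 + \frac{(n-1)\gamma\tau}{n}$, which is exactly the coefficient appearing in the statement.

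For the inner-product term on the right, averaging yields $\frac{1}{n}\langle y - \tilde{y}^{t+1}, A\bar{x}^{t+1}\rangle$, and I then need the identity
\[
\E[\F_t]{\langle \bar{y}^{t+1}, A\bar{x}^{t+1}\rangle} = \langle \tilde{y}^{t+1}, A\bar{x}^{t+1}\rangle.
\]
This is precisely why $\bar{y}^{t+1}$ is defined as $y^t + n(y^{t+1} - y^t)$ in \eqref{eq:alg2_y_bar}: its single nonzero increment over $y^t$ equals $n(\tilde{y}^{t+1}_{i_t} - y^t_{i_t}) a_{i_t}$, whose expectation over the uniform $i_t$ gives $\sum_i (\tilde{y}^{t+1}_i - y^t_i) a_i = A^\top(\tilde{y}^{t+1} - y^t)$, and paired with $\bar{x}^{t+1}$ (which is $\F_t$-measurable) reproduces $\langle \tilde{y}^{t+1} - y^t, A\bar{x}^{t+1}\rangle$. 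Combined with the third identity above for the $f^*$ terms, the averaged right-hand side matches the claimed expression exactly.

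The main obstacle is not depth but bookkeeping: one must keep the roles of $\tilde{y}^{t+1}$ (as a conceptual full-update), $y^{t+1}$ (the actual single-coordinate update) and $\bar{y}^{t+1}$ (the scaled extrapolation) distinct, while verifying the conditional expectation identities termwise. The only non-obvious conceptual point is recognizing that the factor $n$ in the definition of $\bar{y}^{t+1}$ is precisely the debiasing needed to make $\E[\F_t]{\langle \bar{y}^{t+1}, A\bar{x}^{t+1}\rangle} = \langle \tilde{y}^{t+1}, A\bar{x}^{t+1}\rangle$; once this is observed, the algebraic coefficient match $1 + \frac{(n-1)\gamma\tau}{n}$ appears automatically from the averaging.
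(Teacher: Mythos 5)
Your proposal is correct and follows essentially the same route as the paper: both sum the per-coordinate inequality \eqref{eq:y_single} over $i$, use the conditional-expectation identities induced by the uniform sampling of $i_t$ to rewrite the $\tilde{y}^{t+1}_i$ terms via $y^{t+1}$ and $\bar{y}^{t+1}$ (with $\E[\F_t]{\bar{y}^{t+1}}=\tilde{y}^{t+1}$ doing the debiasing), and collect the coefficient $1+\frac{(n-1)\gamma\tau}{n}$. The only difference is cosmetic ordering — you sum first and then substitute, while the paper substitutes into each coordinate inequality before summing.
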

\begin{proof}
Note that when conditioning on $\F_t$, $\bar{x}^{t+1}$ is deterministic and independent of $i_t$. Hence, for each $i$, $y^{t+1}_i=\tilde{y}^{t+1}_i$ with probability $1/n$, and $y^{t+1}_i=y^t_i$ with probability $(n-1)/n$. This implies the following relationships that hold for any $y\in\R^n$:
\begin{align*}
\E[\F_t]{(y^{t+1}_i - y_i)^2} =& \frac{1}{n}(\tilde{y}^{t+1}_i - y_i)^2 + \frac{n-1}{n}(y^t_i - y_i)^2,\\
\E[\F_t]{(y^{t+1}_i - y^t_i)^2} =& \frac{1}{n}(\tilde{y}^{t+1}_i - y_i^t)^2,\\
\E[\F_t]{y^{t+1}_i} =& \frac{1}{n}\tilde{y}^{t+1}_i + \frac{n-1}{n}y^t_i, \\
\E[\F_t]{f_i^*(y^{t+1}_i)} =& \frac{1}{n}f_i^*(\tilde{y}^{t+1}_i) + \frac{n-1}{n}f_i^*(y^t_i).
\end{align*}
Plugging these relationships into \eqref{eq:y_single}, we obtain:
\begin{align*}
&\frac{1}{2\tau}\E[\F_t]{(n+(n-1)\gamma\tau)(y^t_i - y_i)^2
    - n\left(1+\gamma\tau\right)(y^{t+1}_i - y_i)^2
    - n(y^{t+1}_i - y^t_i)^2} \\
\geq& \left\langle \left(y_i - n\E[\F_t]{y^{t+1}_i} + (n-1)y^t_i\right)a_i,\bar{x}^{t+1}\right\rangle
    + n \E[\F_t]{f^*_i(y^{t+1}_i)} - (n-1)f^*_i(y^t_i) - f^*_i(y_i).
\end{align*}
Summing this inequality for $i\in\{1,2,\dots,n\}$ and using \eqref{eq:alg2_y_bar}, we get:
\begin{align*}
&\frac{1}{2\tau}\E[\F_t]{\left(1+\frac{(n-1)\gamma\tau}{n}\right)\|y^t - y\|_2^2-\left(1+\gamma\tau\right)\|y^{t+1}-y\|_2^2- \|y^{t+1} - y^t\|_2^2} \\
\geq& \frac{1}{n}\langle y- n\E[\F_t]{y^{t+1}}+(n-1)y^t, A\bar{x}^{t+1} \rangle + n \E[\F_t]{f^*(y^{t+1})} - (n-1)f^*(y^t) - f^*(y) \\
=& \frac{1}{n}\E[\F_t]{-\langle \bar{y}^{t+1}-y, A\bar{x}^{t+1} \rangle} + n \E[\F_t]{f^*(y^{t+1})} - (n-1)f^*(y^t) - f^*(y),
\end{align*}
which is the desired inequality \eqref{eq:lemma_y_2}.
\end{proof}

Now, we are ready to provide the convergence complexity for SDAPD (Algorithm \ref{alg:2}).
\begin{theorem}\label{thm:alg2}
Assume Assumption \ref{assumption:stoc} holds. We choose algorithm parameters as
\begin{gather*}
\eta=\frac{1}{\bar{R}}\sqrt{\frac{\gamma}{n\mu}},\quad
\tau=\frac{1}{\bar{R}}\sqrt{\frac{n\mu}{\gamma}},\quad
\beta_t=\frac{1}{\bar{R}}\sqrt{\frac{\gamma}{n\mu}}\cdot\xi^t, \mbox{ with } \xi\coloneqq 1 + \frac{1}{n+\bar{R}\sqrt{n/(\mu\gamma)}}.
\end{gather*}
Consider the first $T$ iterations of SDAPD and define $\hat{x}^T=\frac{1}{B_{T-1}}\sum_{t=0}^{T-1}\beta_t\bar{x}^{t+1}$, SDAPD converges linearly in expectation:
\begin{align*}
\E{\|\hat{x}^T - x^*\|^2_2}\leq \frac{\Delta_0}{\xi^T - 1},
\end{align*}
where $\Delta_0$ is a constant depending on $\bar{R}$, the initial point $(x^0,y^0)$ and optimal solution $(x^*,y^*)$. {Note that $(x^*,y^*)$ is unique here due to the strong convexity-concavity assumption}.
\end{theorem}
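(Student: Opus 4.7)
The plan is to adapt the proof of Theorem \ref{thm:alg1} to the stochastic setting, with the primary modification being the treatment of the extra factor $(n-1)/n$ that appears in Lemma \eqref{eq:lemma_y_2}. The global structure of the argument is identical: combine the $x$-progress bound with the $y$-progress bound, arrange the quadratic cross-terms so that they cancel, set up a telescoping sum across $t$, and finally invoke convexity-concavity of $\tilde{F}$ together with the strong convexity of $g$.

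First I would multiply \eqref{eq:lemma_y_2} by $\beta_t$ and add the result to \eqref{eq:lemma_x_2}. The inner product terms combine cleanly: $\tfrac{\beta_t}{n}\langle \bar{y}^{t+1},A\bar{x}^{t+1}\rangle - \tfrac{\beta_t}{n}\langle \bar{y}^{t+1}-y,A\bar{x}^{t+1}\rangle = \tfrac{\beta_t}{n}\langle y,A\bar{x}^{t+1}\rangle$, which is exactly the quantity needed to form $\tilde{F}(\bar{x}^{t+1},y)$. To eliminate the $\|y^{t+1}-y^t\|_2^2$ term I would impose $\bar{R}^2\eta\tau\le 1$, which the theorem's step sizes satisfy with equality. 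This yields an inequality of the form
\begin{align*}
&\E[\F_t]{\tilde{\phi}^*_{t+1}-\tilde{\phi}^*_t} + \frac{\beta_t}{2\tau}\E[\F_t]{\left(1+\tfrac{(n-1)\gamma\tau}{n}\right)\|y^t-y\|_2^2 - (1+\gamma\tau)\|y^{t+1}-y\|_2^2} \\
&\qquad\ge \beta_t\E[\F_t]{\tilde{F}(\bar{x}^{t+1},y)-\tilde{F}(x,y^{t+1})\text{-type terms}},
\end{align*}
where the right-hand side also carries the asymmetric $nf^*(y^{t+1})-(n-1)f^*(y^t)-f^*(y)$ piece; this itself telescopes across $t$ after taking expectations and summing, leaving only the $-f^*(y)$ contribution plus boundary terms.

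To convert the inequality into a genuine telescope, I would impose the analogue of \eqref{eq:c3}, namely $\beta_{t+1}\bigl(1+\tfrac{(n-1)\gamma\tau}{n}\bigr)\le \beta_t(1+\gamma\tau)$. Rearranging, this requires $\beta_{t+1}/\beta_t \le 1 + \tfrac{\gamma\tau/n}{1+\gamma\tau-\gamma\tau/n}$. Plugging in $\gamma\tau=\sqrt{n\mu\gamma}/\bar{R}$ and simplifying, this right-hand side becomes exactly $1+\tfrac{1}{n+\bar{R}\sqrt{n/(\mu\gamma)}}=\xi$, which explains the particular form of $\xi$ in the theorem statement and confirms feasibility of the geometric growth $\beta_t=\beta_0\xi^t$. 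The condition $\eta(1+B_{t-1}\mu)\ge \beta_t$ required by Lemma \ref{lemma:stoc_x} must also be verified; this follows from $B_{t-1}\ge \beta_0\cdot \tfrac{\xi^t-1}{\xi-1}$ combined with the choice of $\eta$ and $\mu$.

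With telescoping in place, summing over $t=0,\dots,T-1$, taking unconditional expectation, and using $\tilde{\phi}^*_0=0$ together with the minimizer characterization $\tilde{\phi}^*_T\le \tilde{\phi}_T(x^*)$ (exactly as in the deterministic case) gives
\begin{align*}
\sum_{t=0}^{T-1}\beta_t\,\E{\tilde{F}(\bar{x}^{t+1},y^*)-\tilde{F}(x^*,y^{t+1})} \le \tfrac{\beta_0}{2\tau}\|y^0-y^*\|_2^2 + \tfrac{1}{2}\|x^0-x^*\|_2^2 + \text{(boundary terms)}.
\end{align*}
Dividing by $B_{T-1}=\beta_0(\xi^T-1)/(\xi-1)$, applying convexity-concavity of $\tilde{F}$ with Jensen to pass to $\hat{x}^T$, and finally invoking the $\mu$-strong convexity of $\tilde{F}(\cdot,y^*)$ (just as in \eqref{cor-proof-eq-2}) yields the claimed bound $\E{\|\hat{x}^T-x^*\|_2^2}\le \Delta_0/(\xi^T-1)$, where $\Delta_0$ absorbs $\|x^0-x^*\|_2^2$, $\|y^0-y^*\|_2^2$, a factor depending on $\bar{R},\mu,\gamma,n$, and the $\mu^{-1}$ coming from strong convexity.

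The main obstacle is bookkeeping rather than conceptual: the asymmetric coefficient $1+(n-1)\gamma\tau/n$ on $\|y^t-y\|_2^2$ versus $1+\gamma\tau$ on $\|y^{t+1}-y\|_2^2$ forces the growth rate $\xi$ to be essentially $1+\Theta(1/(n+\bar{R}\sqrt{n/(\mu\gamma)}))$ rather than the cleaner DAPD rate $1+\sqrt{\mu\gamma}/R$, and checking that this specific $\xi$ is compatible with the primal condition $\eta(1+B_{t-1}\mu)\ge\beta_t$ and simultaneously with the summation of the $nf^*(y^{t+1})-(n-1)f^*(y^t)$ telescope requires carefully tracking all boundary terms before bundling them into $\Delta_0$.
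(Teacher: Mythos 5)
Your proposal follows essentially the same route as the paper's proof: multiply the dual lemma by $\beta_t$, add it to the primal lemma, cancel the $\|y^{t+1}-y^t\|_2^2$ terms via $\eta\tau=1/\bar{R}^2$, telescope using $\beta_{t+1}\bigl(1+\tfrac{(n-1)\gamma\tau}{n}\bigr)\le\beta_t(1+\gamma\tau)$, and finish with the $\mu$-strong convexity of $\tilde{F}(\cdot,y^*)$. The only loose point is your claim that the asymmetric $nf^*(y^{t+1})-(n-1)f^*(y^t)$ sum "telescopes": with geometrically growing weights it does not cancel exactly, and the paper instead discards the interior terms using $n\beta_{t-1}\ge(n-1)\beta_t$ together with $\tilde{F}(x^*,y^*)-\tilde{F}(x^*,y^t)\ge 0$, leaving exactly the boundary term $-(n-1)\beta_0\bigl(\tilde{F}(x^*,y^*)-\tilde{F}(x^*,y^0)\bigr)$ that you correctly anticipate absorbing into $\Delta_0$.
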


\begin{proof}
When conditioning on $\F_t$, we multiply \eqref{eq:lemma_y_2} by $\beta_t$ and add it to \eqref{eq:lemma_x_2}. We have
\begin{align*}
& \E[\F_t]{\tilde{\phi}^*_{t+1} - \tilde{\phi}^*_t}+ \frac{\beta_t}{2\tau}\E[\F_t]{\left(1+\frac{(n-1)\gamma\tau}{n}\right)\|y^t - y\|_2^2
    - \left(1+\gamma\tau\right)\|y^{t+1} - y\|_2^2 } \\
\geq&  \beta_t\E[\F_t]{g(\bar{x}^{t+1}) + \frac{1}{n}\langle \bar{y}^{t+1}, A\bar{x}^{t+1}\rangle}- \frac{\bar{R}^2\beta_t\eta}{2}\E[\F_t]{\|y^{t+1} - y^t\|^2_2} \\
&+ \beta_t\E[\F_t]{-\frac{1}{n}\langle \bar{y}^{t+1}-y, A\bar{x}^{t+1} \rangle + n f^*(y^{t+1}) - (n-1)f^*(y^t) - f^*(y)}  \\
&+ \frac{\beta_t}{2\tau}\E[\F_t]{\|y^{t+1} -y^t\|_2^2} \\
=& \beta_t\E[\F_t]{g(\bar{x}^{t+1}) + \frac{1}{n}\langle y,A\bar{x}^{t+1} \rangle + n f^*(y^{t+1}) - (n-1)f^*(y^t) - f^*(y)},
\numberthis\label{eq:alg2_key}
\end{align*}
where the equality uses the fact $\eta\tau= 1/\bar{R}^2$. Note that our parameters satisfy $\beta_t(1+\gamma\tau)\geq \beta_{t+1}\alpha$, where $\alpha\coloneqq 1 + \frac{(n-1)\gamma\tau}{n}$, from which we can upper bound the left-hand-side of \eqref{eq:alg2_key} by
\begin{align*}
& \E[\F_t]{\tilde{\phi}^*_{t+1}} - \tilde{\phi}^*_t
    + \frac{\alpha\beta_t}{2\tau}\|y^t-y\|_2^2 - \frac{\alpha\beta_{t+1}}{2\tau}\E[\F_t]{\|y^{t+1}-y\|_2^2} \\
=& \left(\frac{\alpha\beta_t}{2\tau}\|y^t-y\|_2^2 - \tilde{\phi}^*_t\right)
    - \E[\F_t]{\frac{\alpha\beta_{t+1}}{2\tau}\|y^{t+1}-y\|_2^2 - \tilde{\phi}^*_{t+1}}.
\end{align*}
Therefore, \eqref{eq:alg2_key} reduces to:
\begin{align*}
& \left(\frac{\alpha\beta_t}{2\tau}\|y^t-y\|_2^2 - \tilde{\phi}^*_t\right)
    - \E[\F_t]{\frac{\alpha\beta_{t+1}}{2\tau}\|y^{t+1}-y\|_2^2 - \tilde{\phi}^*_{t+1}}. \\
\geq& \beta_t\E[\F_t]{g(\bar{x}^{t+1}) + \frac{1}{n}\langle y,A\bar{x}^{t+1} \rangle
    + n f^*(y^{t+1}) - (n-1)f^*(y^t) - f^*(y)}. \numberthis \label{thm-3.5-eq-1}
\end{align*}
Summing \eqref{thm-3.5-eq-1} over $t=0, \ldots, T-1$ and apply total expectation, we obtain:
\begin{align*}
& \left(\frac{\alpha\beta_0}{2\tau}\|y^0-y\|_2^2 - \tilde{\phi}^*_0\right)
    - \E{\frac{\alpha\beta_{T}}{2\tau}\|y^T-y\|_2^2 - \tilde{\phi}^*_T} \\
\geq& \sum_{t=0}^{T-1}\beta_t\E{g(\bar{x}^{t+1}) + \frac{1}{n}\langle y,A\bar{x}^{t+1} \rangle
    + n f^*(y^{t+1}) - (n-1)f^*(y^t) - f^*(y)}.
\numberthis\label{eq:alg2_key_2}
\end{align*}
Using \eqref{def-phi'} and \eqref{eq:alg2_y_bar}, it is easy to see that $\tilde{\phi}^*_0=0$ and
\begin{align*}
\tilde{\phi}^*_T
\leq & \frac{1}{2}\left\|x-x^0\right\|^2_2 + \sum_{t=0}^{T-1} \beta_t\left(g(x) + \frac{1}{n}\langle ny^{t+1}-(n-1)y^t, Ax\rangle \right), \quad \forall x\in\R^n.
\end{align*}
Plugging these to \eqref{eq:alg2_key_2} and dropping the term $\|y^T-y\|_2^2$, we obtain:
\begin{align*}\numberthis\label{eq:last}
& \frac{1}{2}\left\|x-x^0\right\|^2_2 + \frac{\alpha\beta_0}{2\tau}\|y^0-y\|_2^2 \\
\geq& \sum_{t=0}^{T-1}\beta_t\E{g(\bar{x}^{t+1}) - g(x) + \frac{1}{n}\langle y,A(\bar{x}^{t+1}-x) \rangle} \\
&   + \sum_{t=0}^{T-1}\beta_t\E{-\frac{1}{n}\langle ny^{t+1}-(n-1)y^t - y, Ax\rangle + n f^*(y^{t+1}) - (n-1)f^*(y^t) - f^*(y)} \\
=& \sum_{t=0}^{T-1}\beta_t\E{\tilde{F}(\bar{x}^{t+1},y) - \tilde{F}(x,y)} +\sum_{t=0}^{T-1}\beta_t\E{-n\tilde{F}(x,y^{t+1}) + (n-1)\tilde{F}(x,y^t) + \tilde{F}(x,y)}.
\end{align*}
Now, we choose $(x,y)=(x^*,y^*)$. The first term on the right-hand-side of \eqref{eq:last} can be bounded by:
\begin{align*}
\sum_{t=0}^{T-1}\beta_t\E{\tilde{F}(\bar{x}^{t+1},y^*) - \tilde{F}(x^*,y^*)}
\geq& B_{T-1}\E{\tilde{F}(\hat{x}^T,y^*) - \tilde{F}(x^*,y^*)} \\
\geq& \frac{B_{T-1}\mu}{2}\E{\|\hat{x}^T - x^*\|^2_2},
\numberthis\label{eq:last_t2}
\end{align*}
where the $\mu$-strong convexity of $F(\cdot,y^*)$ and the definition of $\hat{x}^T$ are used.
By using the fact $\tilde{F}(x^*,y^*)-\tilde{F}(x^*,y)\geq 0$ for any $y$, we can bound the second term on the right-hand-side of \eqref{eq:last} as:
\begin{align*}
& \sum_{t=0}^{T-1}\beta_t\E{-n\tilde{F}(x^*,y^{t+1}) + (n-1)\tilde{F}(x^*,y^t) + \tilde{F}(x^*,y^*)} \\
=& \sum_{t=0}^{T-1}\beta_t\E{n\left(\tilde{F}(x^*,y^*) - \tilde{F}(x^*,y^{t+1})\right) - (n-1)\left(\tilde{F}(x^*,y^*) - \tilde{F}(x^*,y^t)\right)} \\
=& \sum_{t=1}^{T-1}\left(n\beta_{t-1} - (n-1)\beta_t\right)\E{\tilde{F}(x^*,y^*) - \tilde{F}(x^*,y^t)} \\
&+n\beta_{T-1}\E{\tilde{F}(x^*,y^*) - \tilde{F}(x^*,y^T)} - (n-1)\beta_0\left(\tilde{F}(x^*,y^*) - \tilde{F}(x^*,y^0)\right) \\
\geq& - (n-1)\beta_0\left(\tilde{F}(x^*,y^*) - \tilde{F}(x^*,y^0)\right),
\numberthis\label{eq:last_t1}
\end{align*}
where the inequality follows from the fact that $n\beta_{t-1}\geq (n-1)\beta_t$.
Combining \eqref{eq:last}, \eqref{eq:last_t2} and \eqref{eq:last_t1} gives
\begin{align*}
 \frac{1}{2}\left\|x^0-x^*\right\|^2_2 + \frac{\alpha\beta_0}{2\tau}\|y^0-y^*\|_2^2
    + (n-1)\beta_0\left(\tilde{F}(x^*,y^*) - \tilde{F}(x^*,y^0)\right) \geq \frac{B_{T-1}\mu}{2}\E{\|\hat{x}^T - x^*\|^2_2},
\end{align*}
which leads to the desired result.
\end{proof}

\begin{remark}
Under Assumption \ref{assumption:stoc}, the condition number of problem \eqref{eq:stoc_primal}
usually defined in stochastic optimization literature (see, e.g., \cite{zhang2017stochastic}) is $\bar{\kappa}'\coloneqq \frac{\bar{R}^2}{\mu\gamma}$.
Note that $\kappa'\leq\bar{\kappa}'\leq n\kappa'$. Therefore, Theorem \ref{thm:alg2} implies that the number of iterations needed by SDAPD to achieve $\epsilon$-accuracy is
\begin{equation}
\label{eq:complexity_stochastic}
\O\left(\left(n+\sqrt{n\bar{\kappa}'}\right)\log\frac{1}{\epsilon}\right),
\end{equation}
which matches the lower bound of the complexity of stochastic first-order methods \cite{lan2017optimal}. Moreover, even though $\bar{\kappa}'$ might be larger than $\kappa'$ in DAPD, \eqref{eq:complexity_stochastic} still suggests that SDAPD is faster than DAPD, given that each iteration of DAPD is approximately $n$ times more expensive than SDAPD.

From these results, we can conclude that SDAPD is better than {regularized dual averaging,
the stochastic dual averaging method for minimizing the composite objective function},
whose complexity is in the order of $\O(1/\epsilon)$ under the same assumption \cite{xiao2010dual}. Besides, \eqref{eq:complexity_stochastic} also implies that SDAPD is better than some variance-reduced stochastic methods such as ProxSVRG \cite{xiao2014proximal}, whose complexity is
\[
\O\left(\left(n+\bar{\kappa}'\right)\log\frac{1}{\epsilon}\right),
\]
when the condition number $\bar{\kappa}'$ is larger than $n$. Though some accelerated stochastic methods like Katyusha \cite{allen2017katyusha} and SPDC \cite{zhang2017stochastic} have the same complexity as SDAPD, we will show later that SDAPD is more powerful when the data matrix $A$ is sparse.
\end{remark}

\begin{remark}
\textbf{Generalization to non-smooth or non-strongly-convex problems.}
Our results in this section can be extended to non-smooth or non-strongly convex problems easily, by slightly perturbing the primal-dual formulation.
When $f_i$ is non-smooth, we can augment $f_i^*$ as $\tilde{f}_i^*(y_i) \coloneqq f_i^*(y_i) + \frac{\delta_1}{2}(y_i)^2$. While $g$ is non-strongly convex, it can be perturbed as $\tilde{g}(x) \coloneqq g(x) + \frac{\delta_2}{2}\|x\|^2_2$. Here both $\delta_1$ and $\delta_2$ are small constants that are proportional to the desired solution accuracy $\epsilon$. Following such strategy, we can easily derive the complexities of SDAPD in different seniors, which are presented in Table \ref{table:complexity}. The derivation is similar to the one in \cite{zhang2017stochastic}, and we omit the details here for succinctness.
\end{remark}

\begin{table}
\centering
\caption{Iteration complexities of SDPAD for achieving $\epsilon$-solution accuracy under different settings.
Some constants and logarithmic factors are hidden.}
\begin{tabular}{|c|c|c|}
\hline
                    & {$g(x)$} $\mu$-strongly convex                     & {$g(x)$} non-strongly convex \\ \hline
{$f_i(u)$} $(1/\gamma)$-smooth & $\left(n+\bar{R}\sqrt{n/(\mu\gamma)}\right)\log (1/\epsilon)$ &  $n+\bar{R}\sqrt{n/(\mu\epsilon)}$ \\ \hline
{$f_i(u)$} non-smooth          & $n+\bar{R}\sqrt{n/(\gamma\epsilon)}$  & $n+\bar{R}\sqrt{n}/\epsilon$ \\ \hline
\end{tabular}
\label{table:complexity}
\end{table}

\section{Efficient Implementation of SDAPD on Sparse Data}\label{sec:sparse}

In this section, we focus on the case that each vector $a_i$ is a sparse vector so that the data matrix $A$ is also sparse. We show how to efficiently implement SDAPD (Algorithm \ref{alg:2}) on problems with sparse $A$, which can further reduce the per-iteration complexity of SDAPD from $\O(d)$
to $\O(\rho d)$. 
Throughout this section, we make the following assumption on function $g$.
\begin{assumption}\label{assumption:sparse}
Assume $g(x)$ is separable, i.e., it can be decomposed as $g(x)=\sum_{j=1}^d g_j(x_j)$.
\end{assumption}


{
Here we briefly explain why dual-averaging type algorithm can promote the sparsity. For ease of discussion, we denote $h(x)\coloneqq f(Ax)$. The dual averaging update is:
\begin{equation}\label{eq:da_example}
    z^{t+1}=\prox{B_t g}{z^0 - \sum_{k=0}^t \beta_k \nabla h(z^k)}.
\end{equation}
Note that there is no \emph{direct} dependence between any two consecutive iterates $z^t$ and $z^{t+1}$. The only place where $z^t$ influences $z^{t+1}$ is in estimating the gradient $\nabla h(z^t)$. In many problems with sparse data, the gradient function $\nabla h(z)$ also possesses sparse structure where only a small portion of coordinates of $z^t$ is required for evaluating $\nabla h(z^t)$. Therefore, dual averaging methods allow lazy sparse update, which only updates the coordinates that will be involved in evaluating the next gradient.
}

Other types existing stochastic algorithms are incapable of admitting sparse update, except on certain problems with special structures. (See more details in Remark \ref{rem:sparse-update}).
For example, the gradient might not be sparse for some methods like SAGA \cite{defazio2014saga}, even when the problem data is sparse. Moreover, some accelerated methods require an extrapolation step which requires to add two dense vectors. In the following, we show how to efficiently implement SDAPD for sparse data.

When implementing SDAPD, we need to keep two auxiliary variables $u^t$ and $s^t$, which are defined in \eqref{eq:def_ut} and \eqref{eq:alg2_x} respectively. With $u^t$ and $s^t$ on hand, any coordinate of $x^t$, say $x^t_j$, can be recovered via
\[
x^t_j = \prox{B_{t-1}g_j}{x^0_j - s^t_j}
\]
in only $\O(1)$ time, due to the separable assumption of $g(x)$. Similarly, the $j$-th coordinate of $\bar{x}^{t+1}$ can be computed by
\[
\bar{x}^{t+1}_j = \prox{\eta g_j}{x^t_j - \eta u^t_j}.
\]
Note that $x^t$ is only used in the update of $\bar{x}^{t+1}$, while the only role of $\bar{x}^{t+1}$ is for computing the inner product $a_{i_t}^\top \bar{x}^{t+1}$ in \eqref{eq:alg2_y}. This implies that we do not need to evaluate $x^t_j$ and $\bar{x}^{t+1}_j$ when $a_{i_t,j}=0$. Using this property, the whole iteration of SDAPD can be done in $\O(\|a_{i_t}\|_0)$ computational cost.

Now, the remaining problem is how to update $u^{t+1}$ and $s^{t+1}$ for sparse data. For $u^{t+1}$, it is straightforward by using \eqref{eq:update_ut}, which adds a sparse vector $a_{i_t}$ to $u^t$ in each iteration. The real challenge is how to update $s^{t+1}$ in \eqref{eq:alg2_x}, because it is a summation of dense vectors. Here, we present a novel way to sparsify the update of $s^{t+1}$, by decomposing it into the combination of two sequences. For the ease of discussion, we define
\begin{equation}\label{def:delta}
\delta^t \coloneqq \frac{(y_{i_t}^{t+1} - y_{i_t}^t)}{n}\cdot a_{i_t}.
\end{equation}
Hence, $\delta^t$ is a sparse vector if $a_{i_t}$ is sparse. We need to show the following lemma first.

\begin{lemma}
Consider SDAPD (Algorithm \ref{alg:2}) with $\beta_t$ chosen in the form of $\beta_t=\beta_0\theta^{-t}$ for some $\theta\in(0,1)$.
Define two sequences:
\[
v^{t+1}\coloneqq -\frac{\beta_0\theta}{n(1 - \theta)}A^\top y^0
    + \sum_{k=0}^t\beta_k\left(n - \frac{1}{1-\theta}\right)\delta^k, \mbox{ and }
w^{t+1}\coloneqq \frac{1}{n(1-\theta)}A^\top y^0
    + \frac{1}{1 - \theta}\sum_{k=0}^t\delta^k.
\]
It holds that
\begin{equation}\label{eq:equiv}
s^{t+1} \coloneqq \sum_{k=0}^t \frac{\beta_k}{n}A^\top \bar{y}^{k+1} = v^{t+1} + \beta_t w^{t+1}
\end{equation}
\end{lemma}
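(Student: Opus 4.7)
The plan is to unfold the definition of $s^{t+1}$ by expressing $A^\top \bar{y}^{k+1}$ entirely in terms of $A^\top y^0$ and the sparse increments $\delta^k$, then reorganize the double sum so that the decomposition $v^{t+1}+\beta_t w^{t+1}$ falls out by algebra using the geometric form $\beta_t = \beta_0 \theta^{-t}$.

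First I would observe that because $y^{k+1}$ and $y^k$ differ only in coordinate $i_k$, we have $A^\top(y^{k+1}-y^k) = (y^{k+1}_{i_k} - y^k_{i_k})\, a_{i_k} = n\,\delta^k$ by \eqref{def:delta}. Telescoping this identity yields
\begin{equation*}
\tfrac{1}{n} A^\top y^k = \tfrac{1}{n} A^\top y^0 + \sum_{j=0}^{k-1} \delta^j.
\end{equation*}
Combining with $\bar{y}^{k+1} = y^k + n(y^{k+1}-y^k)$ from \eqref{eq:alg2_y_bar} gives the per-step identity $\tfrac{1}{n} A^\top \bar{y}^{k+1} = \tfrac{1}{n} A^\top y^k + n\delta^k$. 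Substituting both into the definition of $s^{t+1}$ and swapping the order of the resulting double sum produces
\begin{equation*}
s^{t+1} = \Bigl(\sum_{k=0}^{t}\beta_k\Bigr)\tfrac{1}{n} A^\top y^0 + \sum_{j=0}^{t} \delta^j\Bigl(n\beta_j + \sum_{k=j+1}^{t}\beta_k\Bigr).
\end{equation*}

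Next I would use the geometric structure $\beta_k = \beta_0 \theta^{-k}$ to evaluate the two partial sums in closed form: $\sum_{k=0}^{t}\beta_k = (\beta_t - \beta_0\theta)/(1-\theta)$ and, by subtraction, $\sum_{k=j+1}^{t}\beta_k = (\beta_t - \beta_j)/(1-\theta)$. Plugging these in and regrouping terms of $\delta^j$ by isolating the $\beta_j$-coefficient from the $\beta_t$-coefficient gives
\begin{equation*}
s^{t+1} = \tfrac{\beta_t - \beta_0 \theta}{n(1-\theta)} A^\top y^0 + \sum_{j=0}^{t}\beta_j\Bigl(n - \tfrac{1}{1-\theta}\Bigr)\delta^j + \tfrac{\beta_t}{1-\theta}\sum_{j=0}^{t}\delta^j,
\end{equation*}
which is precisely $v^{t+1} + \beta_t w^{t+1}$ after splitting the $A^\top y^0$ coefficient as $\tfrac{\beta_t}{n(1-\theta)} - \tfrac{\beta_0\theta}{n(1-\theta)}$ and assigning one piece to $v^{t+1}$ and the other (through the factor $\beta_t$) to $w^{t+1}$.

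The main obstacle is purely bookkeeping: tracking the index shifts when telescoping $A^\top y^k$ and when swapping the order of summation, together with verifying that the closed-form expression for $\sum_{k=j+1}^t \beta_k$ exactly matches the coefficient pattern split between $v^{t+1}$ and $\beta_t w^{t+1}$. Nothing deeper is needed; the identity is essentially forced once the geometric partial-sum formula is invoked, and this is also where the hypothesis $\beta_t = \beta_0 \theta^{-t}$ becomes essential (a general $\{\beta_t\}$ would not give a two-term decomposition of this form).
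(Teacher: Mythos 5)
Your proposal is correct: I checked the telescoping identity $\tfrac{1}{n}A^\top y^k = \tfrac{1}{n}A^\top y^0 + \sum_{j=0}^{k-1}\delta^j$, the per-step identity $\tfrac{1}{n}A^\top\bar{y}^{k+1} = \tfrac{1}{n}A^\top y^k + n\delta^k$, the swap of the double sum giving the coefficient $n\beta_j + \sum_{k=j+1}^{t}\beta_k$ on $\delta^j$, and the geometric closed forms $\sum_{k=0}^{t}\beta_k = (\beta_t-\beta_0\theta)/(1-\theta)$ and $\sum_{k=j+1}^{t}\beta_k = (\beta_t-\beta_j)/(1-\theta)$; the regrouped expression matches $v^{t+1}+\beta_t w^{t+1}$ term by term. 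Your route is, however, genuinely different from the paper's. The paper proves \eqref{eq:equiv} by induction on $t$: it verifies the case $t=0$ directly and then shows that the increment $\left(v^{t+1}-v^t\right) + \left(\beta_t w^{t+1} - \beta_{t-1}w^t\right)$ equals $\tfrac{\beta_t}{n}A^\top\bar{y}^{t+1}$, using the same two relations \eqref{eq:r1} and \eqref{eq:r2} that you derive, together with $\beta_{t-1}=\beta_t\theta$. The induction is shorter to check but treats the formulas for $v^{t+1}$ and $w^{t+1}$ as given and merely confirms them; your direct expansion is constructive, showing how the two-sequence decomposition is forced by the geometric partial-sum formula and making explicit why the hypothesis $\beta_t=\beta_0\theta^{-t}$ is essential, at the price of somewhat heavier index bookkeeping in the double-sum swap (in particular the empty inner sum at $j=t$, which you handle correctly since $\sum_{k=t+1}^{t}\beta_k=(\beta_t-\beta_t)/(1-\theta)=0$).
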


\begin{proof}
We prove \eqref{eq:equiv} by induction. we first note two useful relationships:
\begin{align}
\frac{1}{n}A^\top y^{t+1} =& \frac{1}{n}A^\top y^t + \delta^t   \label{eq:r1}\\
\frac{1}{n}A^\top \bar{y}^{t+1} =& \frac{1}{n}A^\top y^t + n\delta^t, \label{eq:r2}
\end{align}
which are easy to be obtained from \eqref{def:delta}, \eqref{eq:alg2_y} and \eqref{eq:alg2_y_bar}.

When $t=0$,
\[
v^{t+1}=v^1 = - \frac{\beta_0\theta}{n(1-\theta)}A^\top y^0 + \beta_0\left(n - \frac{1}{1-\theta}\right)\delta^0
\]
and
\[
\beta_t w^{t+1}=\beta_0 w^1 = \frac{\beta_0}{n(1-\theta)}A^\top y^0
    + \frac{\beta_0}{1 - \theta}\delta^0.
\]
By adding these two equations together, we have:
\begin{align*}
v^1+\beta_0 w^1 = \frac{\beta_0}{n}A^\top y^0 + n\beta_0\delta_0 = \frac{\beta_0}{n}A^\top \bar{y}^1,
\end{align*}
where the last equality follows from \eqref{eq:r2}. So \eqref{eq:equiv} is proved for $t=0$.

Now we assume that \eqref{eq:equiv} holds for $t-1$, i.e.,
\[
v^t + \beta_{t-1} w^t = \sum_{k=0}^{t-1} \frac{\beta_k}{n}A^\top \bar{y}^{k+1}.
\]
Thus
\begin{align*}
v^{t+1} + \beta_t w^{t+1} = \left(v^{t+1} - v^t\right) + \left(\beta_t w^{t+1} - \beta_{t-1} w^t\right)
   +\sum_{k=0}^{t-1} \frac{\beta_k}{n}A^\top \bar{y}^{k+1}.
\numberthis \label{eq:r3}
\end{align*}
From \eqref{eq:r1} and the fact $\beta_{t-1}=\beta_t\theta$, we have:
\begin{align*}
\beta_t w^{t+1} - \beta_{t-1} w^t
= \frac{\beta_t}{1-\theta}\delta^t + \frac{\beta_t}{n}A^\top y^t,
\end{align*}
Hence,
\begin{align*}
\left(v^{t+1} - v^t\right) + \left(\beta_t w^{t+1} - \beta_{t-1} w^t\right)
= \beta_t\left(n - \frac{1}{1-\theta}\right)\delta^t + \frac{\beta_t}{1-\theta}\delta^t + \frac{\beta_t}{n}A^\top y^t
= \frac{\beta_t}{n}A^\top \bar{y}^{t+1},
\end{align*}
which is due to \eqref{eq:r2} again. Combining this equation with \eqref{eq:r3} proves \eqref{eq:equiv}.
\end{proof}


\begin{remark}
Note that both $v^{t+1}$ and $w^{t+1}$ are actually the summation of sparse vectors $\delta^t$, except the first term $A^\top y^0$. As a result, after computing $A^\top y^0$ at the very beginning of the algorithm, both $v^{t+1}$ and $w^{t+1}$ can be updated in a sparse way. With the help of these two sequences, the whole algorithm is capable of doing sparse update,
and thus has only $\O(\rho d)$ per-iteration complexity on average instead of $\O(d)$.
In many large scale applications, $\rho$ can be very small like $\rho\approx 10^{-3}$
or even smaller. For example, the well-known DBLP dataset has the sparsity
$\rho\approx 2.0\times 10^{-5}$ \cite{yang2015defining}.
Hence, sparse update can bring great acceleration on such problems.
\end{remark}

We now continue the discussion on theoretical complexity of SDAPD. After combining the sparse update technique discussed above, the overall computation cost of SDAPD to achieve $\epsilon$-accuracy becomes
\[
\O\left(\rho d\left(n+\sqrt{n\bar{\kappa}'}\right)\log\frac{1}{\epsilon}\right)
\]
for strongly convex and smooth problems, if we take both convergence rate and per-iteration computation cost into consideration. This complexity is  better than the complexity of existing accelerated stochastic methods like SPDC, namely,
\[
\O\left(d\left(n+\sqrt{n\bar{\kappa}'}\right)\log\frac{1}{\epsilon}\right),
\]
due to the factor $\rho$ ($0<\rho\leq 1$).

{
\begin{remark}
Lee and Sidford proposed an efficient implementation of accelerated coordinate descent in \cite{lee2013efficient}, which shares similar idea of decomposing the updates into two sequences that can be updated efficiently. However, the motivation of their method is different to ours, and our setting is more challenging. Note that the gradient update in \cite{lee2013efficient} is the same as the typical coordinate descent, which naturally requires only $\O(1)$ computation. What they try to avoid is the computation in the extrapolation step of the other coordinates. As a contrast, we do not only have extrapolation step, but the gradients used in update \eqref{eq:alg2_x} are also the sum of dense vectors. Due to such extra difficulty, our decomposing scheme is different and more complicated than the one in \cite{lee2013efficient}.
\end{remark}
}

{
\begin{remark}\label{rem:sparse-update}
We point out that a sparse implementation of stochastic SPDC was also proposed in \cite{zhang2017stochastic}, and similar idea for Prox-SVRG can be found in \cite{xiao2014proximal}. Such idea can also be extended to other stochastic methods like ProxSGD and SAGA. However, all these methods implicitly require
\[
\mathrm{prox}_g^{(t)}\coloneqq
\underbrace{\mathrm{prox}_g\circ \cdots \circ \mathrm{prox}_g(x)}_{\text{composition of $t$ proximal mappings}}
\]
can be easily computed in constant time independent of $t$. This property enables them to ignore the iterations with zero gradients and is the key for their sparse update trick. However, such property is only satisfied by some special $g(x)$, and only examples on simple regularizers $g(x)=\lambda\|x\|_1$ and $g(x)=(\lambda/2)\|x\|^2_2$ are given in their papers. For these two regularizers, it is quite easy to show that
\[
\mathrm{prox}_{g_j}^{(t)}(x_j) = \mathrm{prox}_{g_j}^{(t-1)}(x_j)\cdot \frac{1}{1+\lambda} =\cdots = \frac{x_j}{(1+\lambda)^t}
\]
for $g(x)=(\lambda/2)\|x\|_2^2$, and
\[
\mathrm{prox}_{g_j}^{(t)}(x_j)
= \left\{ \begin{array}{ll}
x_j - \mathrm{sign}(x_j)\cdot\lambda t \quad&\text{if $|x_j|\geq \lambda t$} \\
0 \quad&\text{otherwise}
\end{array}\right.
\]
if $g(x)=\lambda\|x\|_1$.
However, as far as we can see, it would be difficult to generalize their method to other regularizers such as KL-divergence, namely,
\[
g(x)=\sum_{j=1}^d w_j\log \frac{w_j}{x_j},
\]
which is commonly used in model-based transfer learning \cite{pan2009survey}. For this $g(x)$, computing a proximal mapping needs to solve a quadratic equation which does not admit a simple form of solution. Hence, it is hard to compute $\mathrm{prox}_g^{(t)}(x)$ without computing $\mathrm{prox}_g^{1}(x),\dots,\mathrm{prox}_g^{(t-1)}(x)$  one by one. As a result, their sparse update method would fail on such regularizer. As a comparison, our method does not rely on such assumption and works with any $g(x)$ as long as it is separable.
\end{remark}
}


\section{Numerical Experiments}
\label{sec:exp}

In this section, we conduct numerical experiments to DAPD and SDAPD and compare their performance with the following relevant existing methods:
\begin{itemize}
  \item {PDHG: primal-dual hybrid gradient method \cite{chambolle2011first}}
  \item APGM: Nesterov's accelerated proximal gradient method \cite{nesterov2013introductory}
  \item DA: original dual averaging method \cite{nesterov2009primal}
  \item RDA: regularized dual averaging method \cite{xiao2010dual}
  \item ProxSGD: proximal stochastic (sub-)gradient method \cite{shamir2013stochastic}
  \item ProxSVRG: proximal stochastic variance-reduced gradient method \cite{xiao2014proximal}
  \item SPDC: stochastic primal-dual coordinate method \cite{zhang2017stochastic}
\end{itemize}
Note that the first three methods are deterministic methods, while the others are stochastic methods. Besides, PDGH, APGM and SPDC are accelerated methods.
ProxSGD refers to proximal stochastic gradient descent, when we conduct experiments on smooth problems, and refers to stochastic subgradient method if it is applied to non-smooth problems.
{Though our analysis is based on the ergodic solutions $(\hat{x}^T, \hat{y}^T)$, we mainly report the behavior of the non-ergodic solutions. This is a common practice, because non-ergodic solutions preserve the solution sparsity. For completeness, we also report some comparison of the behavior of the ergodic and non-ergodic solutions in Figure \ref{fig:synthetic_ergodic}.}

\begin{figure*}[tb]
\centering
\includegraphics[width=0.32\linewidth]{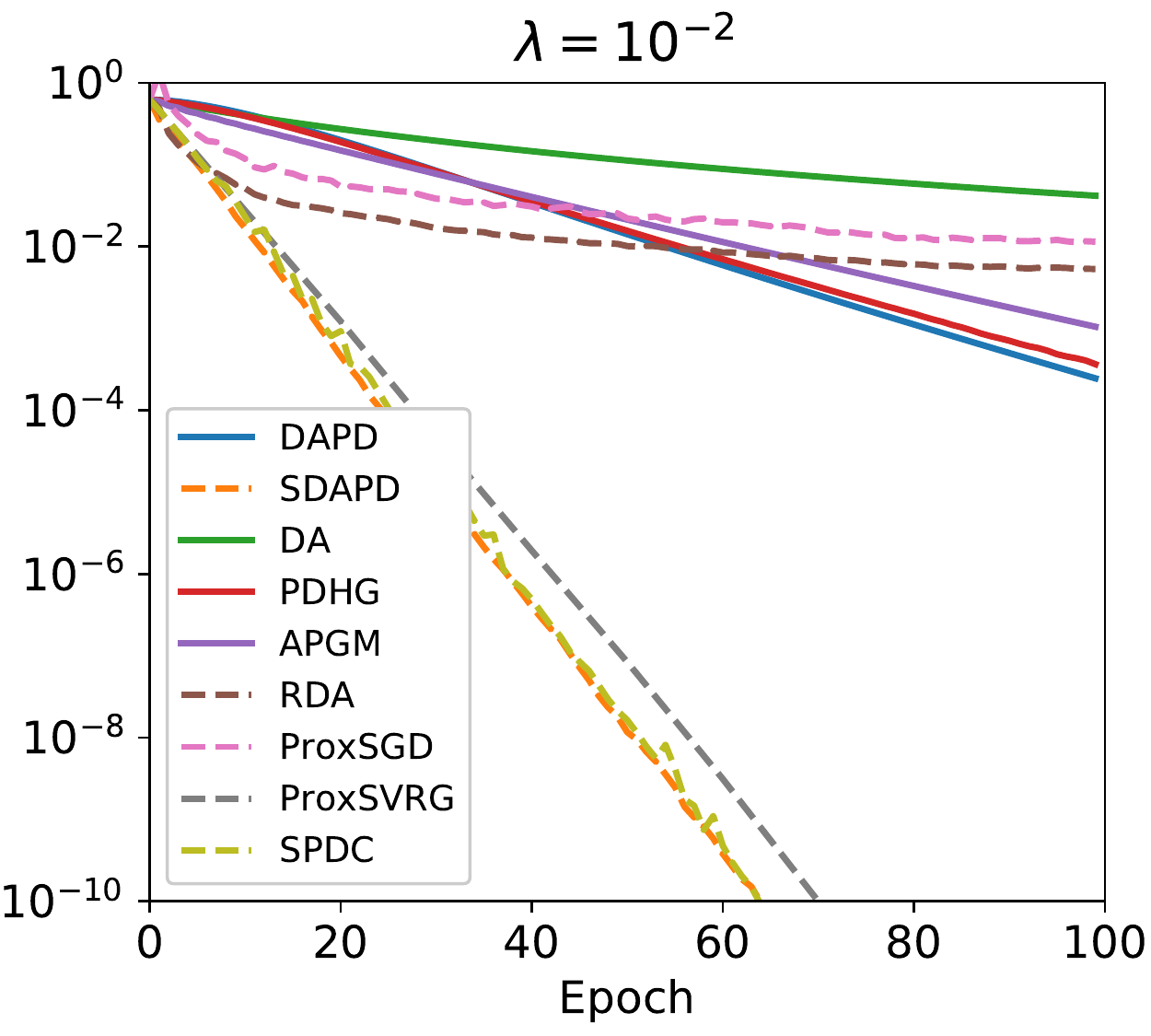}
\includegraphics[width=0.32\linewidth]{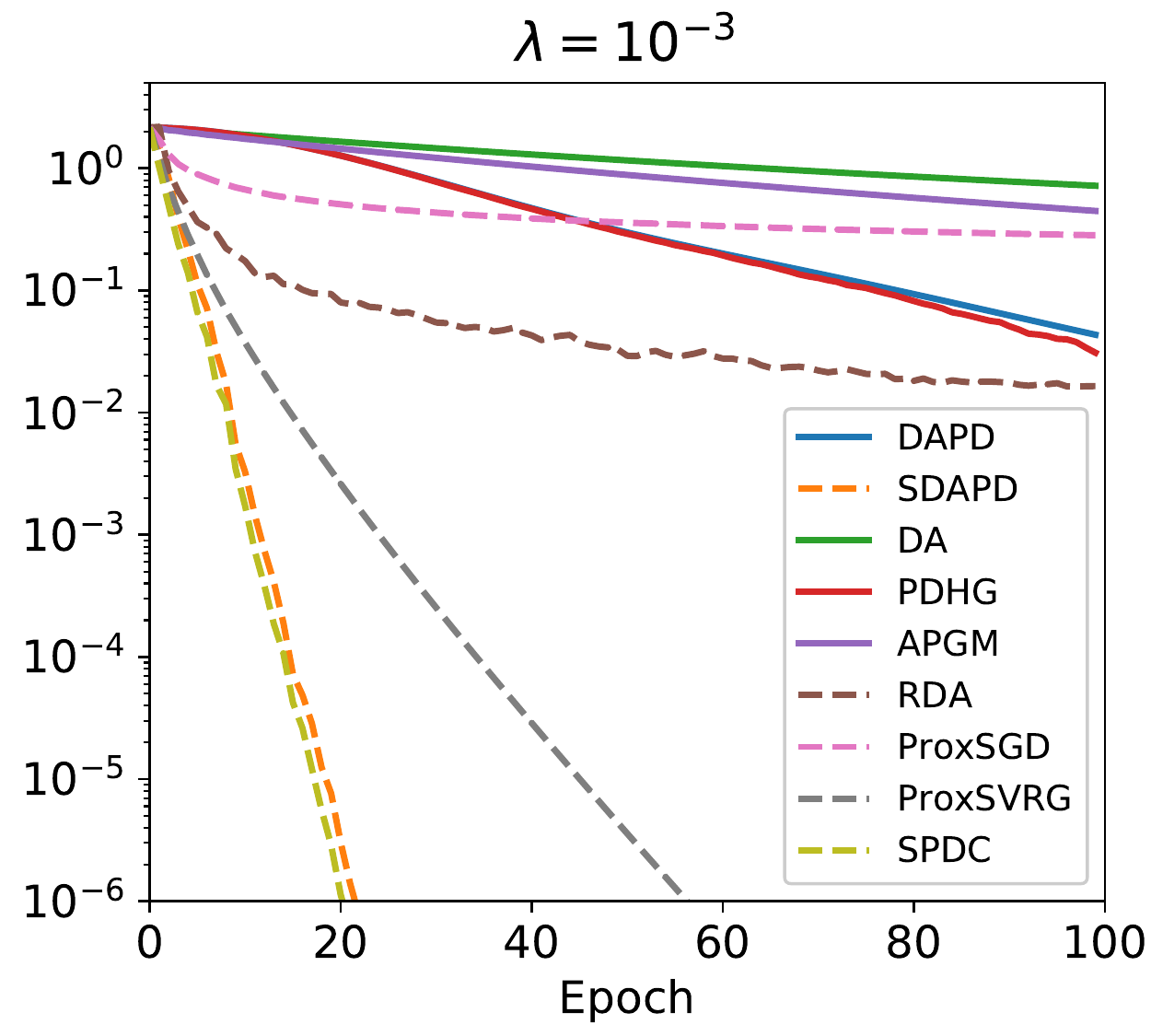}
\includegraphics[width=0.32\linewidth]{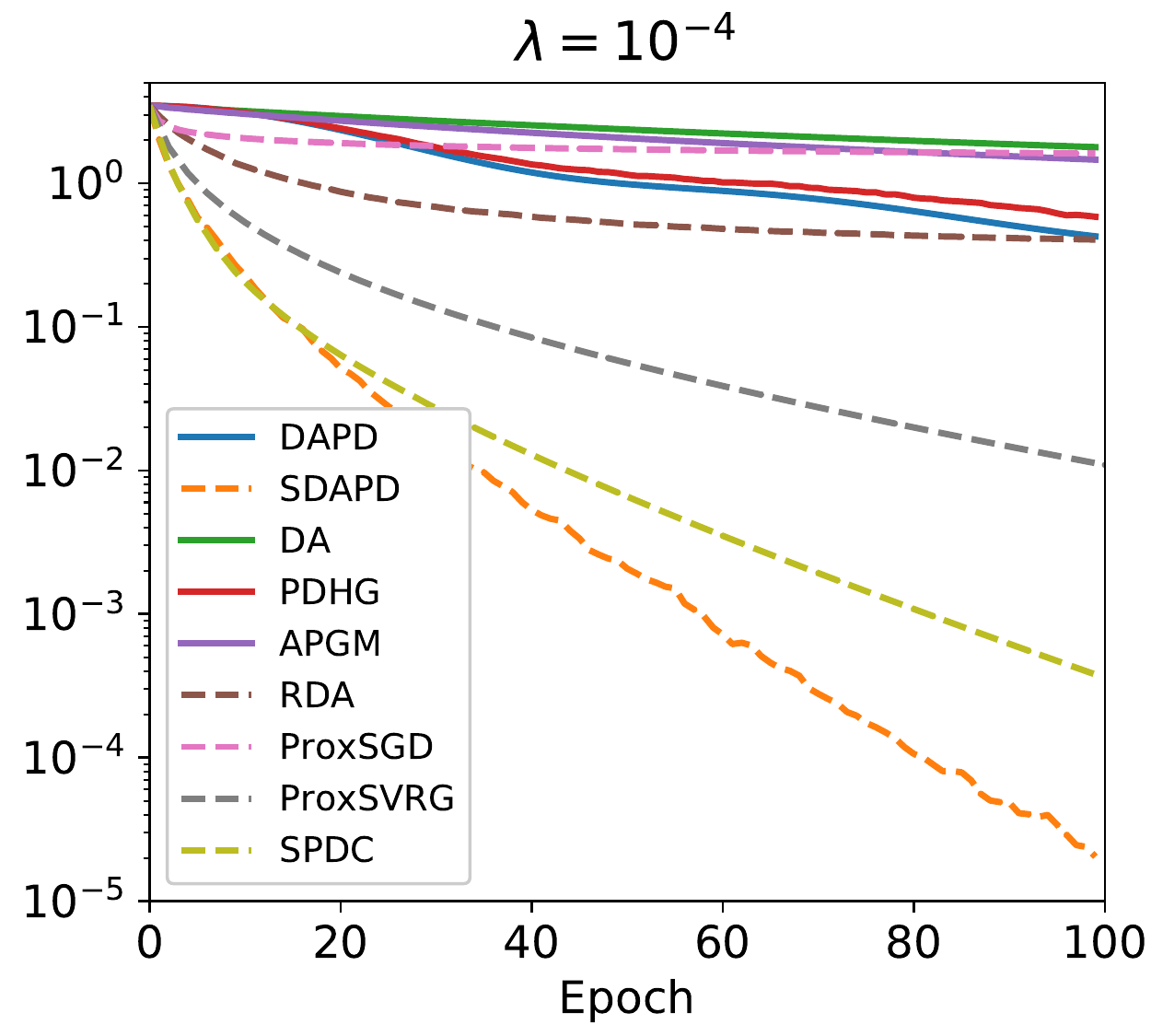}
\caption{Comparison on synthetic data with different choices of $\lambda$. The $y$-axis is the primal sub-optimality, namely $P(x^t) - P(x^*)$.
The solid lines are deterministic methods, and the dashed lines stand for stochastic methods. Here an epoch refers to one iteration for deterministic methods, and $n$ times accesses to the vectors $a_i$ for stochastic methods.}
\label{fig:synthetic}
\end{figure*}

\begin{figure*}[t]
\centering
\includegraphics[width=0.32\linewidth]{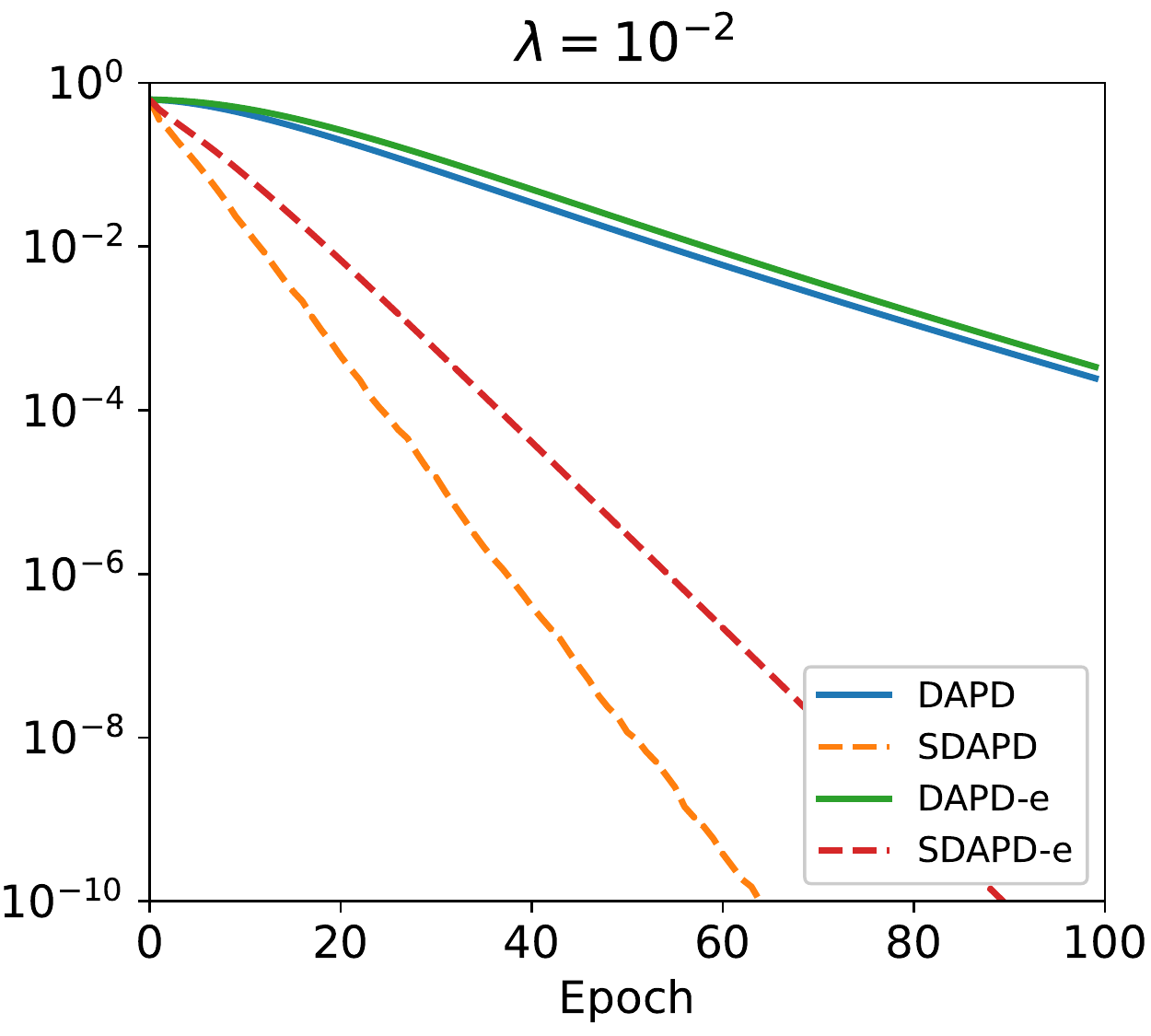}
\includegraphics[width=0.32\linewidth]{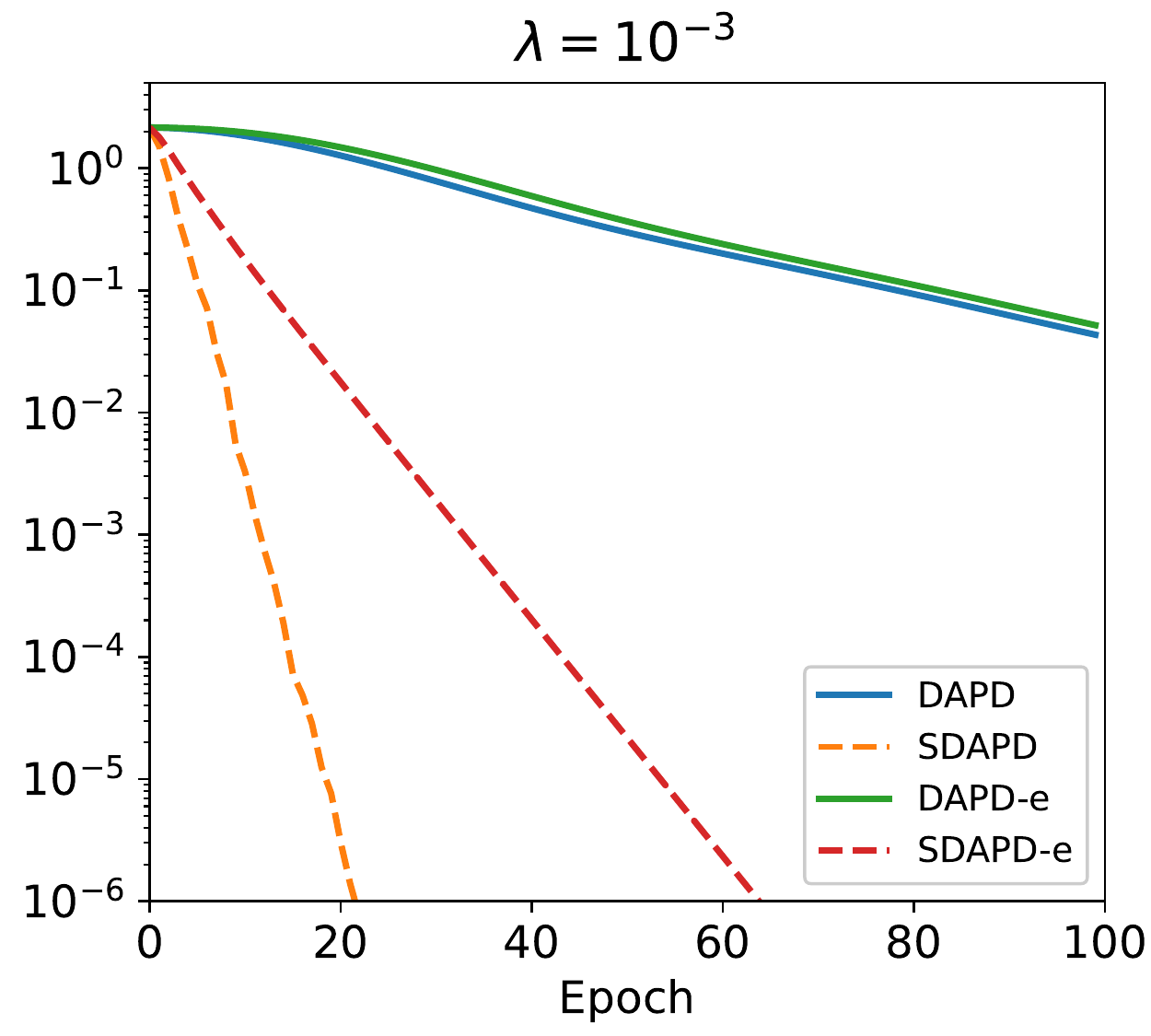}
\includegraphics[width=0.32\linewidth]{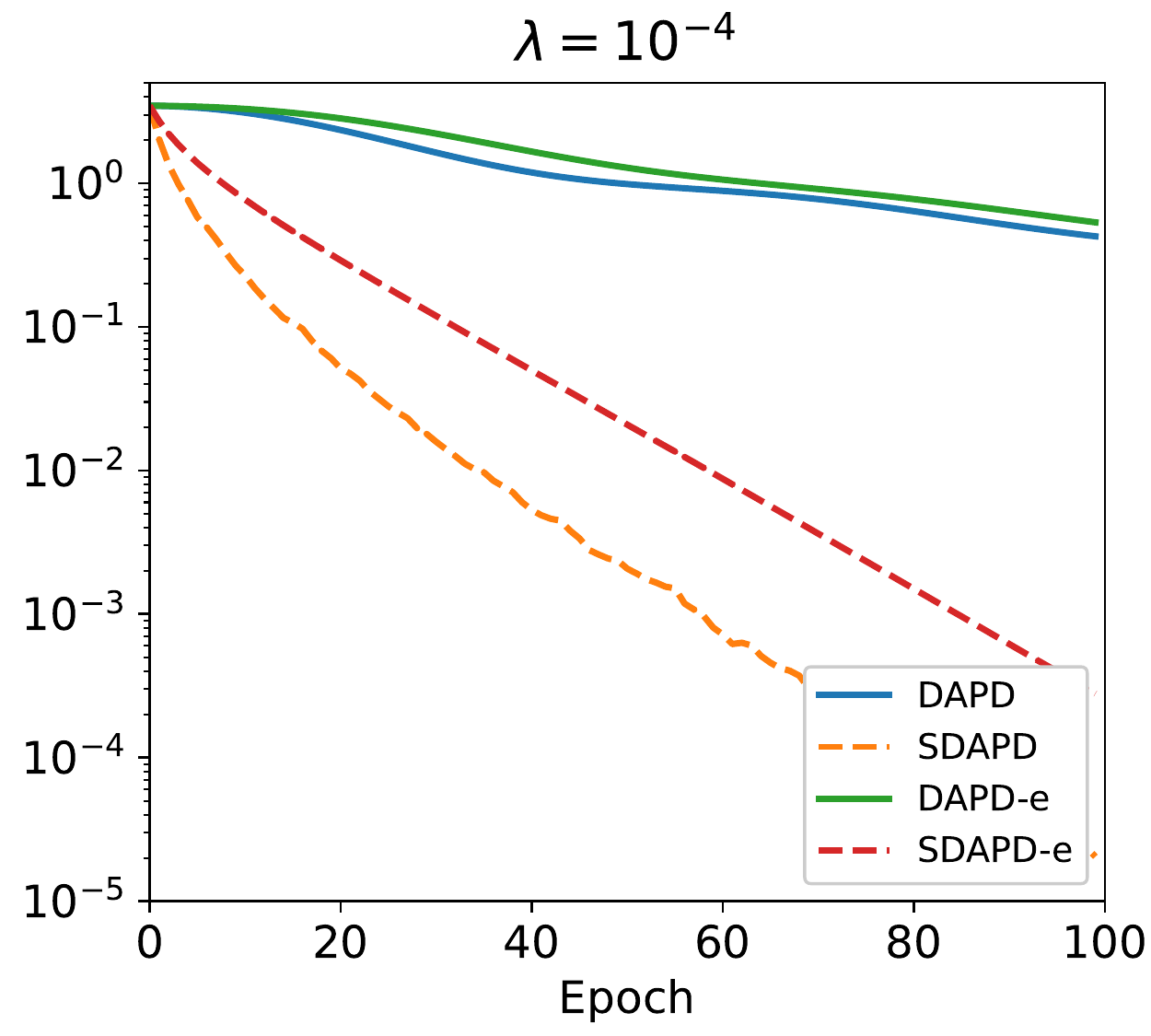}
\caption{{Comparison between ergodic and non-ergodic solutions on synthetic data. The $y$-axis is the primal sub-optimality, namely $P(x^t) - P(x^*)$. Lines with suffix ``-e'' stand for ergodic solutions, while others are non-ergodic.}}
\label{fig:synthetic_ergodic}
\end{figure*}

\subsection{Ridge Regression on Synthetic Data}
First, we test these algorithms on a ridge regression problem:
\[
\min_{x\in\R^d} \frac{1}{n}\sum_{i=1}^n \frac{1}{2}\left(\ip{a_i,x} - b_i\right)^2 + \frac{\lambda}{2}\|x\|^2_2
\]
with $\lambda>0$. Note that this problem is smooth and $\lambda$-strongly convex. We use synthetic data for this problem. Specifically, we first randomly generate a $x^*\in\R^d$, then each $a_i$ and $b_i$ are independently draw from the following model:
\[
b_i=\ip{x^*,a_i}+\varepsilon_i\text{ with } a_i\sim\mathcal{N}(0,\Sigma)
\text{ and } \varepsilon_i\sim\mathcal{N}(0,\sigma^2)
\]
{for some pre-chosen covariance matrix $\Sigma\in\R^{d\times d}$ and constant $\sigma>0$.}
In this experiment, we choose $n=d=1000$. We test the algorithms for different $\lambda$, which controls the condition number of the problem.
Note that smaller $\lambda$ leads to larger condition number.

The experiment results are presented in Figure \ref{fig:synthetic}. In all three sub-figures, the performances of SDAPD and SPDC are quite close,
and are always better than all other methods. Besides, when $\lambda=10^{-2}$, ProxSVRG also performs well, but it soon becomes inferior than SDAPD and SPDC when $\lambda$ gets smaller, since ProxSVRG is not an accelerated method. We also found that DAPD method performs similarly as PDGH, and is always much better than the other two deterministic methods: APGM and DA, though it is slower than some stochastic methods. Besides, when the condition number becomes larger, the performance difference between deterministic methods and stochastic methods becomes more prominent.

{Although the ergodic solutions of dual-averaging-type methods are rarely used in practice, we still report its behavior in Figure \ref{fig:synthetic_ergodic} for completeness. Figure \ref{fig:synthetic_ergodic} shows that the ergodic solutions converge slower than the non-ergodic solutions.}

\begin{figure*}[t]
\centering
\includegraphics[width=0.32\linewidth]{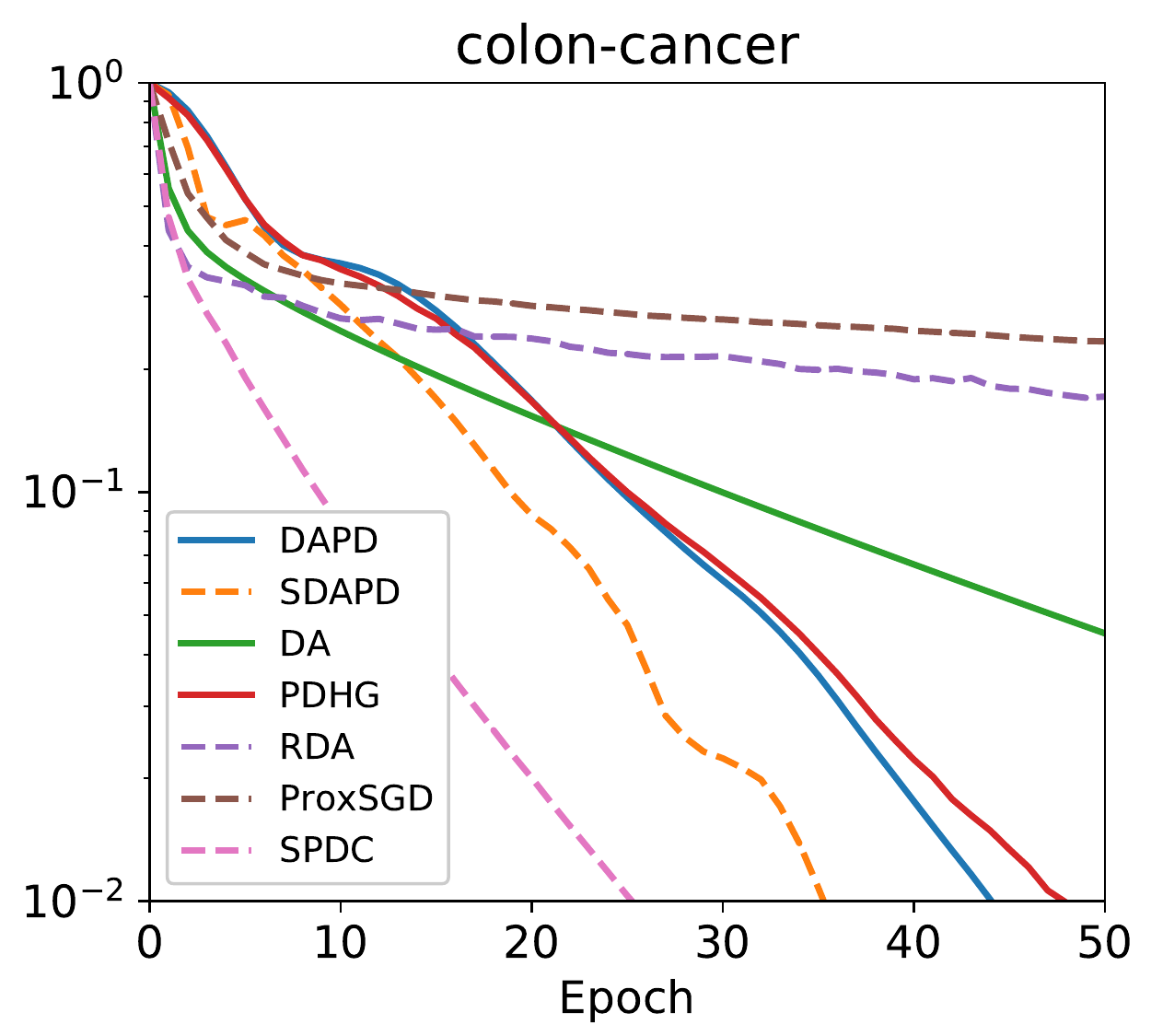}
\includegraphics[width=0.32\linewidth]{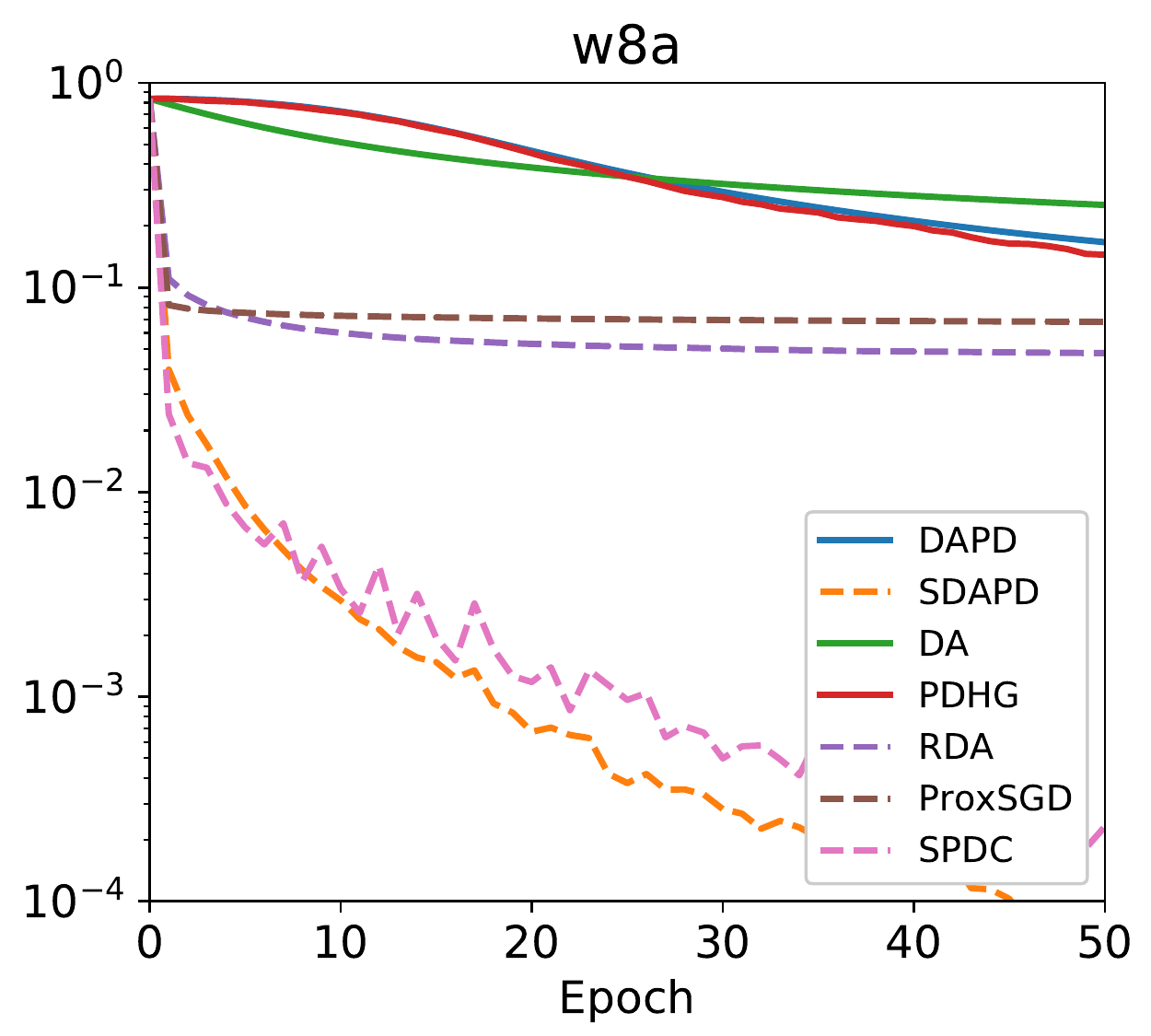}
\includegraphics[width=0.32\linewidth]{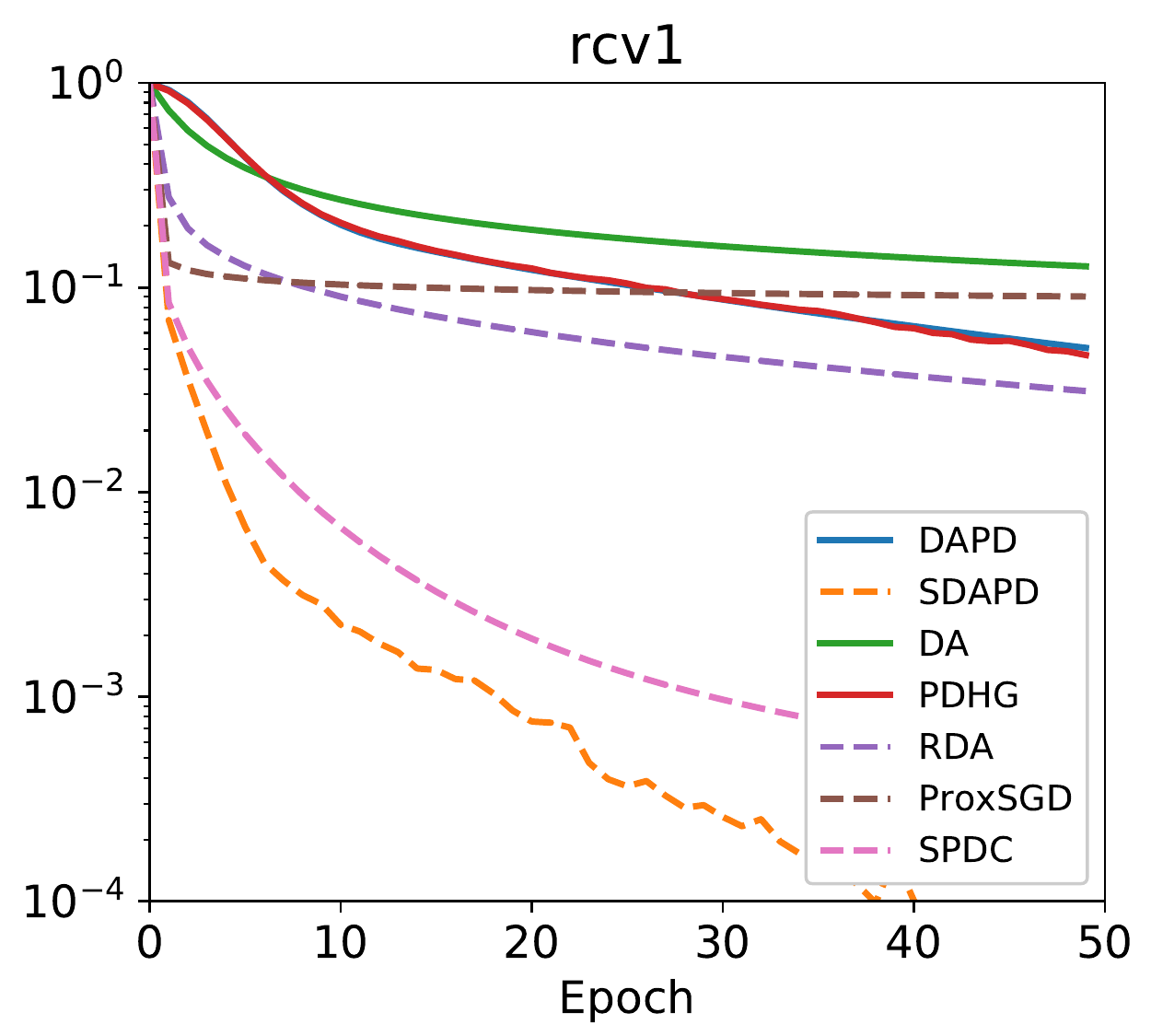}
\caption{Results on real datasets. The $y$-axis is the primal sub-optimality.}
\label{fig:real}
\end{figure*}

\subsection{Classification via SVM on Real Datasets}
\label{sec:exp_svm}

\begin{table}[b]
\centering
\caption{\centering Summary of datasets\\ }
\label{table:dataset}
\begin{tabular}{|c|c|c|c|}
\hline
Dataset & $n$ & $d$ & $\rho$ \\ \hline\hline
\texttt{colon-cancer} & 62 & 2,000 & $100\%$ \\ \hline
\texttt{w8a} & 49,749 & 300 & $3.88\%$ \\ \hline
\texttt{rcv1} & 20,242 & 47,236 & $0.16\%$ \\ \hline
\end{tabular}
\end{table}

{In this part, we test the algorithms on the binary classification task via support vector machine (SVM):
\[
\min_{x\in\R^d}\frac{1}{n}\sum_{i=1}^n \max\left\{1-\ip{b_i a_i, x},\, 0\right\} + g(x).
\]
Here we choose $g(x)$ to be the Huber's regularization, which is defined as: $g(x)=\sum_{j=1}^d g_j(x_j)$ with
\begin{equation}
\label{eq:huber}
g_j(x_j)=\left\{ \begin{array}{ll}
\lambda\big(|x_j| - \frac{\lambda}{4\mu}\big)\quad&\text{if }|x_j|\geq \frac{\lambda}{2\mu}, \\
\mu x_j^2\quad &\text{otherwise}.
\end{array}\right.
\end{equation}
Huber's regularization can also help the model to avoid over-fitting just like the squared-$\ell_2$-norm, but it is statistically more robust than the latter one \cite{zadorozhnyi2016huber}. As far as we know, it would be hard for ProxSGD and SPDC to have sparse update with such $g(x)$. For this experiment, we fix the parameters as $\lambda=10^{-4}$ and $\mu=1$ in Huber's regularization.}
Besides, it should be noted that our objective function is non-smooth in this case. Since APGM and ProxSVRG are unable to deal with such kind of objective, they are not tested for this problem. We use real datasets in this experiment. The dataset information is summarized in Table \ref{table:dataset}. \texttt{w8a} and \texttt{rcv1} are sparse datasets.

The experiment results are presented in Figure \ref{fig:real}. The results are similar to the ones for ridge regression. We observe that SDAPD performs better than all other methods, except that it falls behind SPDC on \texttt{colon-cancer}. Again, the performances of DAPD and PDHG are very close, but they are much better than the other deterministic method DA. Only thing interesting to note here is that the performance of DAPD is close to SDAPD on \texttt{colon-cancer}. It is because this dataset has a relatively small $n$, thus deterministic methods and stochastic methods do not make too much difference in their convergence rates.

We also report the per-epoch running time of each algorithm in Table \ref{table:time}. We see that deterministic methods DAPD, PDGH and DA are always the fastest, since they can do updates in batch with highly-optimized matrix-vector operations. We can also find that ProxSGD and SPDC are quite time-consuming on \texttt{w8a} and \texttt{rcv1} datasets because they are unable to do sparse updates, while our SDAPD overcomes this issue with the help of the sparse update strategy introduced in Section \ref{sec:sparse} and therefore has much less computational cost on sparse data. However, such strategy requires to maintain some auxiliary variables, resulting more computational time to SDAPD than RDA, and even costs more time than ProxSGD and SPDC on dense data. Of course, SDAPD can be further improved by discarding the sparse update strategy on dense data. But we do not adopt this here. Instead, we implement SDAPD in a uniform way to keep the experiments simple.

Overall, by taking both convergence rate and per-epoch computation time into account, SDAPD is the best one among all tested algorithms.

{

\begin{table}[t]
\centering
\caption{\centering Per-epoch running time of each method in seconds\\}
\label{table:time}
\begin{tabular}{|c|c|c|c|}
\hline
Methods & \begin{tabular}{@{}c@{}}\texttt{colon-cancer} \\ ($\times 10^{-3}$) \end{tabular}
 & \begin{tabular}{@{}c@{}}\texttt{w8a} \\ ($\times 10^{-2}$) \end{tabular}
 & \begin{tabular}{@{}c@{}}\texttt{rcv1} \\ ($\times 10^{-2}$)\end{tabular} \\ \hline\hline
DAPD & 1.0 & 5.3 & 4.2 \\ \hline
SDAPD & 5.2 & 17.5 & 19.8 \\ \hline
PDHG & 1.0 & 5.7 & 4.0 \\ \hline
DA & 1.0 & 5.5 & 3.6 \\ \hline
RDA & 2.7 & 7.4 & 6.0 \\ \hline
ProxSGD & 2.9 & 30.3 & 1932 \\ \hline
SPDC & 2.7 & 46.3 & 2125 \\ \hline
\end{tabular}
\vspace{-0.2cm}
\end{table}

\begin{figure*}[t]
\centering
\includegraphics[width=0.32\linewidth]{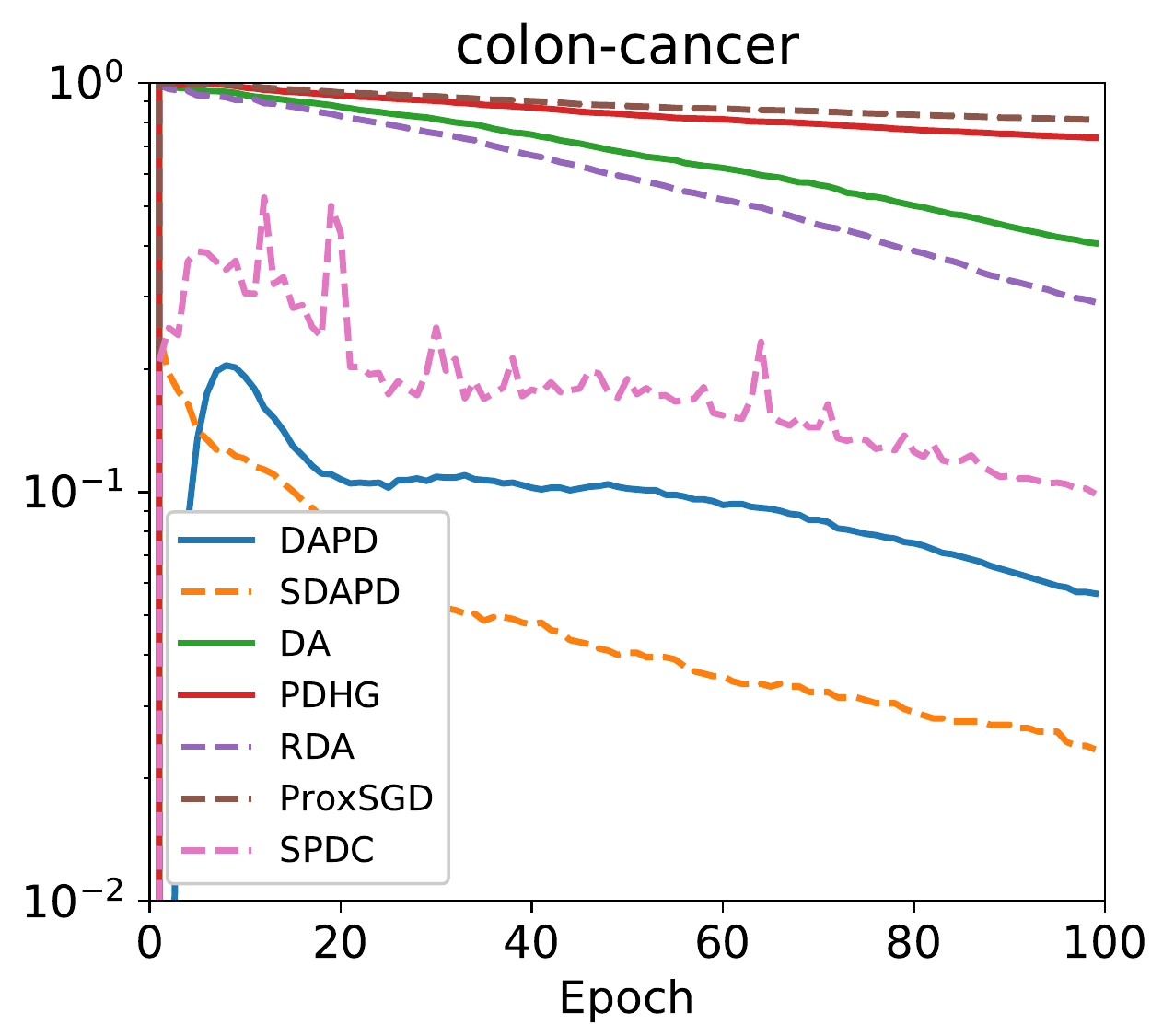}
\includegraphics[width=0.32\linewidth]{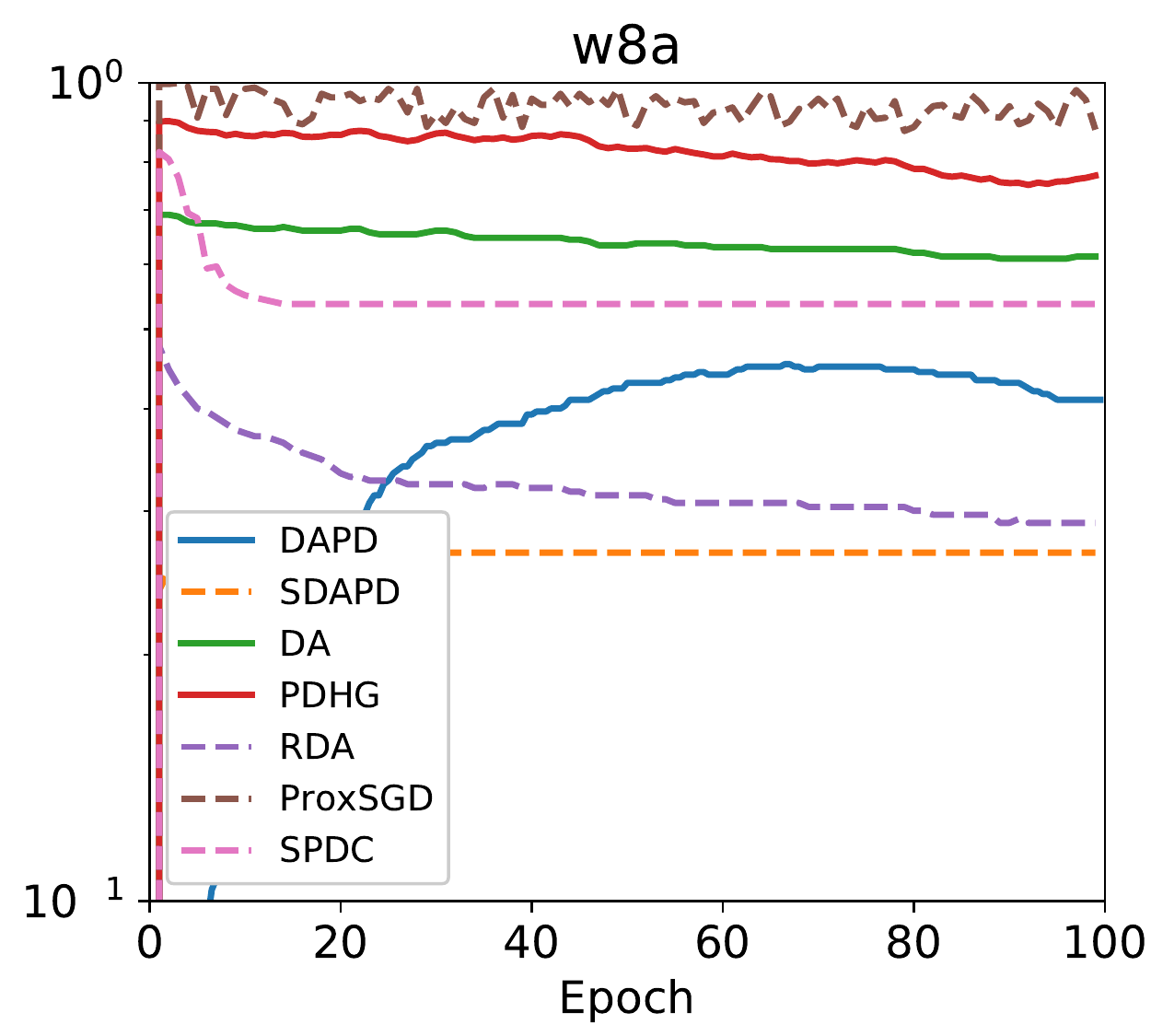}
\includegraphics[width=0.32\linewidth]{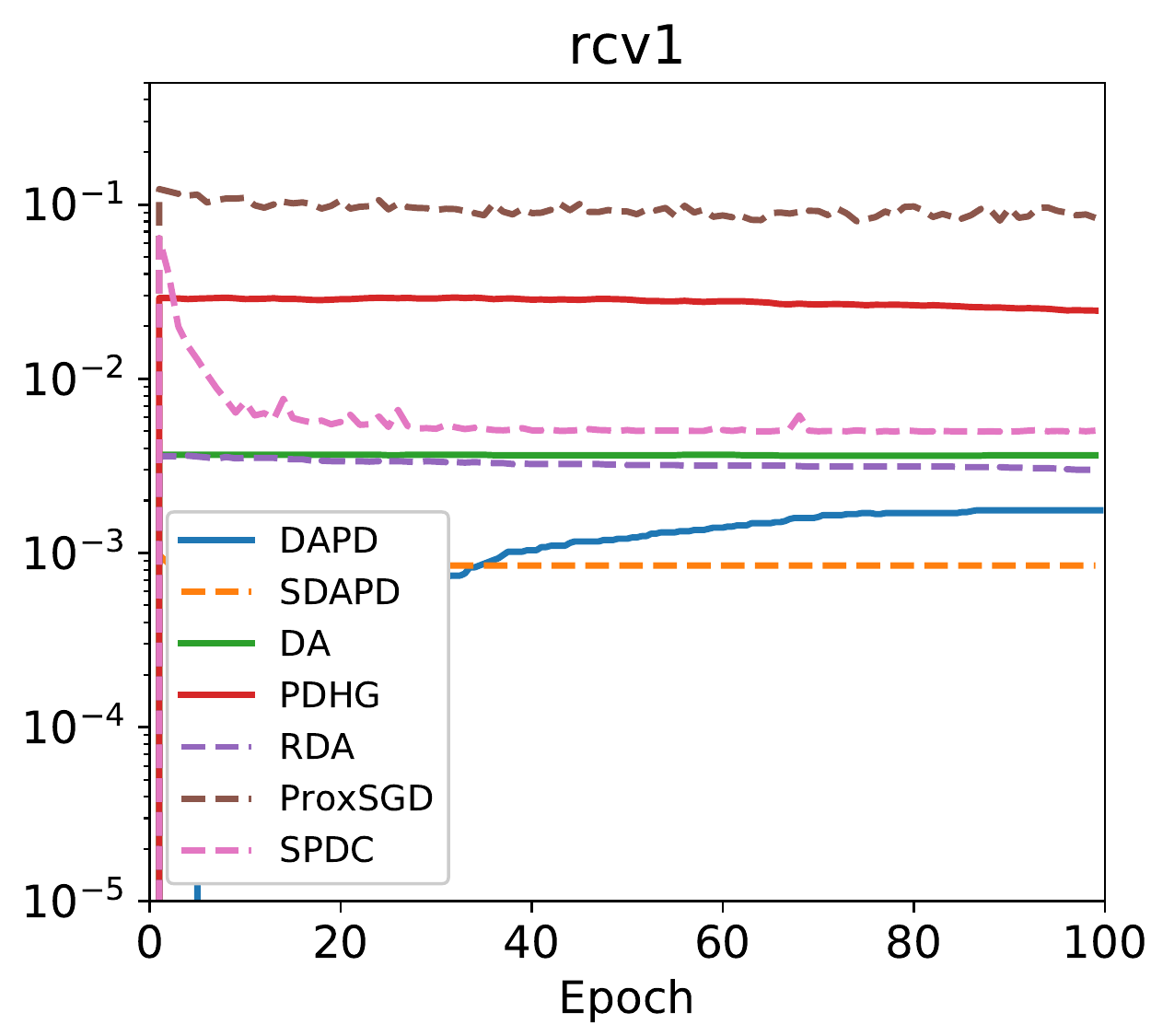}
\caption{{Proportion of non-zeros in the generated non-ergodic solutions.}}
\label{fig:sparsity}
\end{figure*}

\subsection{Comparison on Solution Sparsity}
In this part, we focus on the same setting as the previous part. However, we change the regularizer to $g(x)=\lambda\|x\|_1$ to induce sparse solution, so that we can observe the influence of different optimization methods to solution sparsity. Again, we fix $\lambda=10^{-4}$ on all the datasets.

The results are presented in Figure \ref{fig:sparsity}, which show that our DAPD and SDAPD can produce sparser solutions than most baselines. The only exception is RDA, which is comparable with DAPD and SDAPD on the \texttt{w8a} dataset. This is expected because RDA is known to promote solution sparsity. Moreover, we found that stochastic algorithm SDAPD always outperforms RDA on the tested problems. We conjecture it is because of the increasing primal step sizes $\{\beta_t\}$ in our algorithms that make regularization effects even stronger.
}

\section{Conclusion}
In this paper, we proposed a dual-averaging primal-dual method (DAPD), which combines the idea of dual averaging and primal-dual method, and can solve a wide range of optimization problems with composite convex objective. Our analysis shows that DAPD has optimal convergence rates in several different settings. We also proposed a stochastic version of DAPD (SDAPD) for solving convex problems with a finite-sum objective. A novel way is proposed to efficiently implement SDAPD for sparse data. We demonstrated the superiority of our methods by comparing them with several existing methods on standard machine learning tasks.

\section*{Acknowledgements}

The authors are grateful to two anonymous referees for providing insightful and constructive comments that greatly improved the presentation of this paper. The research of S. Ma was supported in part by a startup package in the Department of Mathematics at University of California, Davis.

\bibliography{reference,NSF-nonsmooth-manifold}

\begin{thebibliography}{10}
\providecommand{\MR}{\relax\unskip\space MR }
\providecommand{\url}[1]{\normalfont{#1}}
\providecommand{\urlprefix}{Available at }

\bibitem{allen2017katyusha}
Z. Allen-Zhu, \emph{Katyusha: The first direct acceleration of stochastic
  gradient methods}, in \emph{Proceedings of the 49th Annual ACM SIGACT
  Symposium on Theory of Computing}. ACM, 2017, pp. 1200--1205.

\bibitem{beck2009fast}
A. Beck and M. Teboulle, \emph{A fast iterative shrinkage-thresholding
  algorithm for linear inverse problems}, SIAM journal on imaging sciences 2
  (2009), pp. 183--202.

\bibitem{chambolle2011first}
A. Chambolle and T. Pock, \emph{A first-order primal-dual algorithm for convex
  problems with applications to imaging}, Journal of mathematical imaging and
  vision 40 (2011), pp. 120--145.

\bibitem{defazio2014saga}
A. Defazio, F. Bach, and S. Lacoste-Julien, \emph{{SAGA}: A fast incremental
  gradient method with support for non-strongly convex composite objectives},
  in \emph{Advances in neural information processing systems}. 2014, pp.
  1646--1654.

\bibitem{gao2017learning}
L. Gao, J. Song, X. Liu, J. Shao, J. Liu, and J. Shao, \emph{Learning in
  high-dimensional multimedia data: the state of the art}, Multimedia Systems
  23 (2017), pp. 303--313.

\bibitem{johnson2013accelerating}
R. Johnson and T. Zhang, \emph{Accelerating stochastic gradient descent using
  predictive variance reduction}, in \emph{Advances in neural information
  processing systems}. 2013, pp. 315--323.

\bibitem{kakade2009duality}
S. Kakade, S. Shalev-Shwartz, and A. Tewari, \emph{On the duality of strong
  convexity and strong smoothness: Learning applications and matrix
  regularization}, Unpublished Manuscript  (2009).

\bibitem{korpelevichextrapolation}
G. Korpelevich, \emph{Extrapolation gradient methods and relation to modified
  lagrangeans. ekonomika i matematicheskie metody, 19: 694--703, 1983},
  Russian; English translation in Matekon .

\bibitem{korpelevich1976extragradient}
G. Korpelevich, \emph{The extragradient method for finding saddle points and
  other problems}, Matecon 12 (1976), pp. 747--756.

\bibitem{lan2017optimal}
G. Lan and Y. Zhou, \emph{An optimal randomized incremental gradient method},
  Mathematical programming  (2017), pp. 1--49.

\bibitem{lee2013efficient}
Y.T. Lee and A. Sidford, \emph{Efficient accelerated coordinate descent methods
  and faster algorithms for solving linear systems}, in \emph{2013 IEEE 54th
  Annual Symposium on Foundations of Computer Science}. IEEE, 2013, pp.
  147--156.

\bibitem{mcmahan2011follow}
H.B. McMahan, \emph{Follow-the-Regularized-Leader and Mirror Descent:
  Equivalence Theorems and L1 Regularization}, in \emph{Proceedings of the
  Fourteenth International Conference on Artificial Intelligence and
  Statistics}. 2011, pp. 525--533.

\bibitem{mcmahan2017survey}
H.B. McMahan, \emph{A survey of algorithms and analysis for adaptive online
  learning}, The Journal of Machine Learning Research 18 (2017), pp.
  3117--3166.

\bibitem{mcmahan2013ad}
H.B. McMahan, G. Holt, D. Sculley, M. Young, D. Ebner, J. Grady, L. Nie, T.
  Phillips, E. Davydov, D. Golovin, \emph{et~al.}, \emph{Ad click prediction: a
  view from the trenches}, in \emph{Proceedings of the 19th ACM SIGKDD
  international conference on Knowledge discovery and data mining}. ACM, 2013,
  pp. 1222--1230.

\bibitem{murata2017doubly}
T. Murata and T. Suzuki, \emph{Doubly accelerated stochastic variance reduced
  dual averaging method for regularized empirical risk minimization}, in
  \emph{Advances in Neural Information Processing Systems}. 2017, pp. 608--617.

\bibitem{nesterov2009primal}
Y. Nesterov, \emph{Primal-dual subgradient methods for convex problems},
  Mathematical programming 120 (2009), pp. 221--259.

\bibitem{nesterov2013introductory}
Y. Nesterov, \emph{Introductory lectures on convex optimization: A basic
  course}, Vol.~87, Springer Science \& Business Media, 2013.

\bibitem{pan2009survey}
S.J. Pan and Q. Yang, \emph{A survey on transfer learning}, IEEE Transactions
  on knowledge and data engineering 22 (2009), pp. 1345--1359.

\bibitem{shamir2013stochastic}
O. Shamir and T. Zhang, \emph{Stochastic gradient descent for non-smooth
  optimization: Convergence results and optimal averaging schemes}, in
  \emph{International Conference on Machine Learning}. 2013, pp. 71--79.

\bibitem{tan2018stochastic}
C. Tan, T. Zhang, S. Ma, and J. Liu, \emph{Stochastic Primal-Dual Method for
  Empirical Risk Minimization with $\mathcal{O}(1)$ Per-Iteration Complexity},
  in \emph{Advances in Neural Information Processing Systems}. 2018, pp.
  8376--8385.

\bibitem{Tibshirani-LASSO-1996}
R. Tibshirani, \emph{Regression shrinkage and selection via the lasso}, J.
  Royal. Statist. Soc B. 58 (1996), pp. 267--288.

\bibitem{tseng2008accelerated}
P. Tseng, \emph{On accelerated proximal gradient methods for convex-concave
  optimization}, submitted to SIAM Journal on Optimization  (2008).

\bibitem{xiao2010dual}
L. Xiao, \emph{Dual averaging methods for regularized stochastic learning and
  online optimization}, Journal of Machine Learning Research 11 (2010), pp.
  2543--2596.

\bibitem{xiao2014proximal}
L. Xiao and T. Zhang, \emph{A proximal stochastic gradient method with
  progressive variance reduction}, SIAM Journal on Optimization 24 (2014), pp.
  2057--2075.

\bibitem{yang2015defining}
J. Yang and J. Leskovec, \emph{Defining and evaluating network communities
  based on ground-truth}, Knowledge and Information Systems 42 (2015), pp.
  181--213.

\bibitem{zadorozhnyi2016huber}
O. Zadorozhnyi, G. Benecke, S. Mandt, T. Scheffer, and M. Kloft,
  \emph{Huber-norm regularization for linear prediction models}, in \emph{Joint
  European Conference on Machine Learning and Knowledge Discovery in
  Databases}. Springer, 2016, pp. 714--730.

\bibitem{zhang2017stochastic}
Y. Zhang and L. Xiao, \emph{Stochastic primal-dual coordinate method for
  regularized empirical risk minimization}, The Journal of Machine Learning
  Research 18 (2017), pp. 2939--2980.

\end{thebibliography}

\end{document}